\pgfplotsset{compat=newest}
\pgfplotsset{plot coordinates/math parser=false}
\newlength\figureheight
\newlength\figurewidth 
\newcommand{\E}{\mathbb{E}}
\renewcommand{\P}{\mathbb{P}}
\newcommand{\R}{\mathbb{R}}
\newcommand{\bb}{\pmb}
\newcommand{\bfa}{\mathbf{a}}
\newcommand{\bfb}{\mathbf{b}}
\newcommand{\bfc}{\mathbf{c}}
\newcommand{\bfg}{\mathbf{g}}
\newcommand{\bfh}{\mathbf{h}}
\newcommand{\bfj}{\mathbf{j}}
\newcommand{\bfn}{\mathbf{n}}
\newcommand{\bfp}{\mathbf{p}}
\newcommand{\bfu}{\mathbf{u}}
\newcommand{\bfv}{\mathbf{v}}
\newcommand{\bfx}{\mathbf{x}}
\newcommand{\bfy}{\mathbf{y}}
\newcommand{\bfA}{\mathbf{A}}
\newcommand{\bfB}{\mathbf{B}}
\newcommand{\bfC}{\mathbf{C}}
\newcommand{\bfE}{\mathbf{E}}
\newcommand{\bfH}{\mathbf{H}}
\newcommand{\bfI}{\mathbf{I}}
\newcommand{\bfL}{\mathbf{L}}
\newcommand{\bfM}{\mathbf{M}}
\newcommand{\bfN}{\mathbf{N}}
\newcommand{\bfR}{\mathbf{R}}
\newcommand{\bfS}{\mathbf{S}}
\newcommand{\bfV}{\mathbf{V}}
\newcommand{\bfNull}{\mathbf{0}}
\newcommand{\bfga}{\boldsymbol{\gamma}}
\newcommand{\bfpsi}{\boldsymbol{\psi}}
\newcommand{\bfPi}{\boldsymbol{\Pi}}
\newcommand{\lVnr}{\langle\bfV,\bfn\rangle}
\newcommand{\cH}{\mathcal{H}}
\newcommand{\cK}{\mathcal{K}}
\newcommand{\cO}{\mathcal{O}}
\newcommand{\bfcC}{\boldsymbol{\mathcal{C}}}
\newcommand{\bfcS}{\boldsymbol{\mathcal{S}}}
\renewcommand{\div}{\operatorname{div}}
\DeclareMathOperator{\curl}{{curl}}
\DeclareMathOperator{\bcurl}{{\mathbf{curl}}}
\DeclareMathOperator{\bgrad}{{\boldsymbol{\nabla}}}
\DeclareMathOperator{\bDelta}{\boldsymbol{\Delta}}
\newcommand{\bcurlg}{\bcurl_{\Gamma}}
\newcommand{\isdef}{\mathrel{\mathrel{\mathop:}=}}
\newcommand{\Cor}{\operatorname{Cor}}
\newcommand{\Cov}{\operatorname{Cov}}
\newcommand{\diag}{\operatorname{diag}}
\renewcommand{\div}{\operatorname{div}}
\renewcommand{\d}{\operatorname{d}\!}
\newcommand{\tr}{\operatorname{tr}}
\DeclareMathOperator{\Id}{\operatorname{Id}}
\DeclareMathOperator{\tvec}{\operatorname{vec}}
\newcommand\numberthis{\addtocounter{equation}{1}\tag{\theequation}}
\theoremstyle{plain}             
\newtheorem{theorem}{Theorem}[section]
\newtheorem{lemma}[theorem]{Lemma}
\newtheorem{corollary}[theorem]{Corollary}
\newcommand{\loc}{{\mathrm{loc}}}
\newcommand{\chispace}{ {\bfH^{-1/2}_\times(\div_\Gamma,\Gamma)}{}}
\newcommand{\chispacecurl}{ {\bfH^{-1/2}_\times(\curl_\Gamma,\Gamma)}{}}
\newcommand{\halfmixchispace}{ {\bfH^{-1/2}_{0,\times}(\div_\Gamma,\Gamma)}{}}
\begin{document}
\title[A perturbation approach for electromagnetic
problems on random domains]
{A higher order perturbation approach for electromagnetic
scattering problems on random domains}

\author{J\"urgen D\"olz}
\address{TU Darmstadt, Department of Mathematics, Dolivostrasse 15, 64293 Darmstadt, Germany}
\email{doelz@mathematik.tu-darmstadt.de}
\thanks{The author was partially supported by SNSF Grant 174987, as well as the Graduate School of Computational Engineering at TU Darmstadt, the Excellence Initiative of the German Federal and State Governments, and the Graduate School of Computational Engineering at TU Darmstadt. }

\begin{abstract}
We consider time-harmonic electromagnetic scattering problems on
perfectly conducting scatterers with uncertain shape. Thus, the scattered field will also be uncertain. Based on the knowledge of the two-point correlation of the domain boundary variations around a reference domain, we derive a perturbation analysis for the mean of the scattered field. Therefore, we compute the second shape derivative of the scattering problem for a single perturbation. Taking the mean, this leads to an at least third order accurate approximation with respect to the perturbation amplitude of the domain variations. To compute the required second order correction term, a tensor product equation on the domain boundary has to be solved. We discuss its discretization and efficient solution using boundary integral equations. Numerical experiments in three dimensions are presented.
\end{abstract}
\subjclass[2010]{35Q60, 60H35, 65C30, 65N38}

\maketitle


\section{Introduction}

Electromagnetic scattering is a phenomenon which occurs in various areas of interest
in engineering such as radar scattering, simulation of wireless networks, etc. Due to
its physical complexity, computer simulations are inevitable during
engineering projects for the prediction of the behaviour of scattered electromagnetic waves.
In many cases, the modeling and simulation of electromagnetic scattering
problems is well understood for full and deterministic knowledge of the input data.
However, due to manufacturing tolerances, the manufactured objects are unlikely to
coincide with their mathematical model. This can be interpreted as a random behaviour
in their realization and has raised considerable interest in the engineering
community, see, e.g., \cite{GLRS18, LYB+19, SGB+19, SAJF18}.

The present article shall deal with the particular problem of electromagnetic scattering
on random, perfect conductors in the time-harmonic regime. When formulated in terms of
partial differential equations (PDEs), this amounts to the solution of such an equation on a
random domain. A crucial role in the numerical treatment of this kind of problems is
the mathematical modeling of the randomness in the domain. Common approaches in the
literature can be split into domain mapping approaches, see \cite{CNT16, HPS16, JSZ17,XT06},
and perturbation approaches, see \cite{DH18, HSS08, JS16}. A hybrid approach combining the
two methods was presented in \cite{CNT17}. Domain mapping approaches transfer
the uncertainty in the domain onto a PDE with random coefficients
on a fixed reference domain and are able to deal with large deformations. For the computation
of statistical quantities of interest such as the mean or the variance of the solutions,
high-dimensional quadrature rules such as (quasi) Monte-Carlo or sparse grids are usually
employed, see, e.g., \cite{BG04b, Caf98, DKS13}.

However, for small domain
perturbations, as they occur for example in optical gratings, see \cite{GLRS18,SAJF18}, perturbation approaches usually provide a more efficient tool for an
accurate approximation of the mean or the variance of the PDE's solution.
The main idea is to model the randomness in the domains as a perturbation of size
$\varepsilon$ of a reference domain.
In the context of time-harmonic electromagnetic scattering problems, one can show that under some
smoothness assumptions on the domain, see \cite{JS16}, the mean of the random scattered
waves $\bfE_\varepsilon^s(\omega)$ can be expressed as
\begin{align}\label{eq:meanfirstorder}
\E\big[\bfE_\varepsilon^s\big]=\bfE^s_0+\cO(\varepsilon^2),\qquad 0<\varepsilon\leq\varepsilon_0,
\end{align}
in some domain which has a positive distance from all
random domains. Here, $\bfE_0^s$ is the scattered wave of the reference domain,
which is, without loss of generality, assumed to be the mean of all random domains.

The contribution of the present article is as follows. First, we characterize the second
order correction term of a single random domain perturbation $\bfV(\omega)$. I.e., we give a characterization of
the term $\delta^2\bfE^s[\bfV(\omega),
\bfV(\omega)]$ in the shape Taylor expansion around the unperturbed problem
\[
\bfE^s(\omega) = \bfE^s_0+\varepsilon\delta\bfE^s[\bfV(\omega)]+\frac{\varepsilon^2}{2}\delta^2\bfE^s[\bfV(\omega),
\bfV(\omega)]
+\cO(\varepsilon^3)
\]
of the scattered field. To the best of the author's knowledge, the previous works on domain
derivatives for time-harmonic scattering \cite{CLL12, CLL12b, Het12, HL18, Pot96} only
characterize the first order correction term $\delta\bfE^s[\bfV(\omega)]$. Based on this second
order correction term we improve the expansion \eqref{eq:meanfirstorder}
by an additional second order correction term to
\[
\E\big[\bfE_\varepsilon^s\big]=\bfE^s_0+\frac{\varepsilon^2}{2}
\E\big[\delta^2\bfE^s\big]+\cO(\varepsilon^3).
\]
As we will see, this expansion becomes even
$\cO(\varepsilon^4)$ accurate if the distribution of the random domain satisfies
some symmetry conditions. The second order correction term $\E\big[\delta^2\bfE^s\big]$ can
be characterized as the solution of an electromagnetic scattering problem with certain
boundary data. These boundary data depend on statistical quantities depending on the
random domain perturbations.

Unfortunately, the random domain perturbations enter these statistical quantities in a non-linear way. To circumvent this obstacle, we follow the
approach of \cite{DH18} and reformulate
these non-linearities as diagonals of correlations of random variables on the boundary of
the reference domain. Using boundary integral equations, which are well established for
the solution of electromagnetic scattering problems, see \cite{BH03, CK12, Ned01}, we show that these correlations can
be obtained from the solution of a single additional equation in a tensor product space.
We discuss the solution of this equation by the means of a Galerkin scheme. The introduction
of suitable $L^2$-projections allows the use of any standard technique for the efficient treatment of correlation equations to accelerate the computations.

The remainder of the article is structured as follows. Section~\ref{sec:prel} recalls
the required terminology for the treatment of electromagnetic scattering problems.
In section~\ref{sec:randomdomains} we state the electromagnetic scattering problem on random
domains and compute the second Fr\'echet derivative of the scattered electromagnetic wave for
a single domain perturbation. Section~\ref{sec:BIE} is concerned with the derivation of
the second order correction term for the mean of the scattered field and its boundary values,
whereas section~\ref{sec:galerkin} is concerned with the Galerkin discretization. In
section~\ref{sec:fast}, we comment on efficient solution techniques of the corresponding
tensor product equation, whereas section~\ref{sec:examples} is concerned with numerical
examples. Finally, in section~\ref{sec:concl}, we draw our conclusions.

\section{Preliminaries}
\label{sec:prel}

On a bounded Lipschitz domain $D\subset\mathbb{R}^3$ and for $0\leq s$ we denote by $H^s(D)$ the usual Sobolev spaces \cite{McL01}, and by $\mathbf{H}^s(D)$ their vector valued counterparts. For $s=0$ we use the convention $H (D) = H^0 (D) = L^2(D)$ and $\mathbf{H} (D) = \mathbf{H}^0 (D) = \mathbf{L}^2(D)$. On unbounded domains $D^c:=\mathbb{R}^3\setminus\overline{D}$ we use the same notation together with the subscript $\loc$ in the form of $H^s_\loc(D^c)$ and $\mathbf{H}^s_\loc(D^c)$ to denote that the required integrability conditions must only be fulfilled on all bounded subdomains.

For compact manifolds $\Gamma=\partial D$ we denote by $H^s(\Gamma)$ the usual construction of Sobolev spaces on manifolds via charts and by $\mathbf{H}^s(\Gamma)$ their vector valued counterparts. As commonly done, we define the spaces $H^{-s}(\Gamma)$ and $\mathbf{H}^{-s}(\Gamma)$ as the dual spaces of $H^s(\Gamma)$ and $\mathbf{H}^s(\Gamma)$ with $L^2(\Gamma)$ and $\mathbf{L}^2(\Gamma)$ as pivot spaces. The corresponding duality
products are denoted by $\langle\cdot,\cdot\rangle_0$ and
$\langle\cdot,\cdot\rangle_{\bfNull}$.

Let $\mathcal{M}$ be one of the domains $D$, $D^c$, or their boundary $\Gamma$. For any differential operators $\operatorname{d}$ defined on $\mathcal{M}$, we define the spaces $H^s(\operatorname{d},\mathcal{M})$ via the closure of $H^s(\mathcal{M})$ under the graph norm $\|\cdot\|_{H^s(\mathcal{M})}+\|\operatorname d(\cdot)\|_{H^s(\mathcal{M})}$, equipping the spaces with the same. This definition is generalised to vector valued differential operators and spaces in complete analogy.

For further reference we introduce the following surface differential operators, see also \cite[Chapter 2.5.6]{Ned01}. Given a sufficiently smooth
scalar valued function $u$ on $\Gamma$ with a sufficiently smooth extension $\tilde{u}$ into $D^c$, the vector valued surface curl $\bcurl_{\Gamma}$ and the surface gradient
$\bgrad_{\Gamma}$ are defined as
\begin{align*}
\bcurl_{\Gamma} u={}&\big[\bgrad\tilde{u}\big]\times\bfn,\\
\bgrad_{\Gamma} u={}&\bfn\times\bcurl_{\Gamma}u.
\end{align*}
The surface differential operators to not depend on the chosen extension $\tilde{u}$. By $\bfn$ we denote the exterior normal of $\Gamma$. For vector valued functions $\bfu$,
we denote by $\big[\bgrad_{\Gamma}\bfu\big]$ the matrix whose $i$-th column contains the surface gradient of the $i$-th component of $\bfu$.

Given a tangential vector field $\bfu$ to $\Gamma$ and a sufficiently smooth extension $\tilde{\bfu}$, we define the scalar valued surface curl
$\curl_\Gamma$ and the surface divergence $\div_\Gamma$ by
\begin{align*}
\curl_\Gamma\bfu={}&\langle\bcurl\tilde{\bfu},\bfn\rangle,\\
\div_\Gamma\bfu={}&\curl_\Gamma(\bfn\times\bfu).
\end{align*}
Here, we denote by $\langle\cdot,\cdot\rangle$ the Euclidean scalar product in $\R^3$. Again, the surface differential operators do not depend
on the chosen extension $\tilde{\bfu}$.

We also require the following tangential trace operators, see also \cite{BC03}.	For $\mathbf{u}\in C(D^c; \mathbb C^3)$, we define the exterior rotated tangential trace operator as
\begin{align*}
\bfga_{ t}^+ (\mathbf{u})(\mathbf{x}_0) & = \lim_{\substack{\mathbf{x}\to \mathbf{x}_0\\\mathbf{x}\in D^c}}\bfn_{\mathbf{x}_0}\times\mathbf{u}(\mathbf{x}),\quad\text{for all}~\mathbf{x}_0\in\Gamma,
\end{align*}
where $\bfn_{\mathbf{x}_0}$ denotes the exterior normal vector of $\Gamma$ at $\mathbf{x}_0$. Further, the magnetic rotated tangential trace and the
tangential trace are defined as
\begin{align*}
\bfga_N^+={}&\bfga_{t}^+\circ\bcurl,\\
\bfga_0^+={}&\bfga_t^+(\cdot)\times\bfn.
\end{align*}
We add the ``+'' to our notation to stress the fact that we consider the trace from the exterior domain only.
By density arguments, see also \cite{BC03}, these traces can be extended to be continuous operators on $\mathbf{H}_\loc(\bcurl,D^c)$, where we define
\begin{align*}
\chispace ={}& \bfga_t^+\big(\mathbf{H}_\loc(\bcurl,D^c)\big),\\
\chispacecurl ={}& \bfga_0^+\big(\mathbf{H}_\loc(\bcurl,D^c)\big).
\end{align*}
The space $\chispace$ is a Hilbert space and its own dual with respect to the pairing
\[
\langle \bb\mu,\bb\nu\rangle_{\times} = \int_\Gamma (\bb\mu\times\bfn_{\mathbf{x}}) \cdot \bb\nu \d \Gamma_{\mathbf{x}}.
\]
In fact, the introduced surface differential and trace operators can be extended to continuous operators on Sobolev spaces such that the
diagram
\begin{equation}\label{eq:diagram}
\begin{tikzcd}[row sep = 2em,column sep = 1.3cm]
H^1(D^c)\ar{dd}[description]{\cdot|_{\Gamma}}\ar{r}[description]{\bgrad}&
\bfH(\bcurl,D^c)\ar{r}[description]{\bcurl}\ar{d}[description]{\bfga_0^+} \arrow[ddd,bend right=60,"{\bfga_t^+}" description, near start]&
\bfH(\div,D^c)\ar{dd}[description]{\langle\cdot,\bfn\rangle}\\
&\bfH_\times^{{-1/2}}(\curl_\Gamma,\Gamma)\ar{dr}[description]{\curl_\Gamma}  \arrow{dd}[description]{\cdot\times\bfn} &\\
H^{{1/2}}(\Gamma)\ar{dr}[description]{{\bcurl_\Gamma}} \ar{ur}[description]{\bgrad_\Gamma} & &             H^{{-1/2}}(\Gamma)\\
&\bfH_\times^{{-1/2}}(\div_\Gamma,\Gamma)\ar{ur}[description]{\div_\Gamma}&
\end{tikzcd}
\end{equation}
commutes, see also \cite{BDK+18}.

For further reference we also introduce the tensor product Sobolev space on $\Gamma\times\Gamma$ defined by
\begin{align*}
\halfmixchispace={}&L^2(\Gamma)\otimes\chispace.
\end{align*}
By a tensor product argument, we see that this space is self-dual with
respect to the duality product $\langle\cdot,\cdot\rangle_{0,\times}$
which is defined on simple tensors as
\begin{align*}
\langle u_1\otimes\bfu_2,v_1\otimes\bfv_2\rangle_{0,\times}
={}&
\langle u_1, v_1\rangle_0\langle\bfu_2,\bfv_2\rangle_\times
\end{align*}
for $u_1,v_1\in L^2(\Gamma)$ and $\bfu_2,\bfv_2\in\chispace$.

\section{Perturbation Analysis for Electromagnetic Scattering Problems}
\label{sec:randomdomains}

\subsection{The Electromagnetic Scattering Problem on Random Domains}

Given a perfectly conducting object $D\subset\R^3$ with Lipschitz boundary $\Gamma$ in a surrounding $D^c$, we are interested in the scattered field $\bfE^s$ of an electric incident wave $\bfE^i$ hitting the scatterer $D$. Assuming a time-harmonic problem, the scattered field $\bfE^s$ can then be described by the \emph{electric wave equation}
\begin{align}\label{problem::ext_scattering}
\begin{aligned}
	\bcurl \bcurl\, \bfE^s -\kappa^2\bfE^s &=\mathbf{0}&&\text{in}~D^c,\\
	\bfga_t^+\bfE^s &= -\bfga_t^+\bfE^i&&\text{on}~\Gamma,\\
	\big|\bcurl\,\bfE^s(\mathbf{x})\times{\mathbf{x}}\cdot{|\mathbf{x}|}^{-1}-i\kappa\bfE^s(\mathbf{x})\big|&=\mathcal{O}(|\mathbf{x}|^{-2})&&\text{for}~|\mathbf{x}|\to\infty.
\end{aligned}
\end{align}
The last equation is referred to as Silver-M\"uller radiation condition.
It is well known that \eqref{problem::ext_scattering} is uniquely solvable for any sufficiently regular Dirichlet data and wavenumber $\kappa >0$, see, e.g., \cite{BH03}.
Given an incident wave $\bfE^i$, the total electric field $\bfE_{\mathrm{tot}}$ in $D^c$ is then given by $\bfE_{\mathrm{tot}} = \bfE^i+\bfE^s$.

For the formulation of the scattering problem on random domains,
let $D_0\subset\R^3$ be a reference 
domain with, a $C^{2,1}$-boundary. On a separable, complete probability space $(\Omega,
\Sigma,\P)$, consider a random vector field $\bfV\in L_\P^2(\Omega; C^{2,1}
(\Gamma,\R^3))$ with $\|\bfV(\omega,\cdot)\| _{C^{2,1}(\Gamma;\mathbb{R}^3)}
\lesssim 1$ uniformly for all $\omega\in\Omega$. The random vector field perturbs the boundary 
of the reference domain $\Gamma$ in accordance with $\partial D_{\varepsilon}
(\omega)\isdef\Gamma+\varepsilon\bfV(\omega ,\Gamma)$ for some 
given $\varepsilon>0$. A \emph{random domain} $D_{\varepsilon}(\omega)$ 
is then given by the interior of the perturbed boundary $\partial D_{\varepsilon}
(\omega)$. For later considerations, we also introduce a compact set $G\subset D^c$,
which we assume to have an arbitrary positive distance to
\[
D_{\varepsilon}^{\cup\Omega}\isdef \bigcup _{\omega\in\Omega}D_{\varepsilon}(\omega),\qquad 0<\varepsilon\leq\varepsilon_0.
\]

Note that, in contrast to the domain mapping approach which requires 
a vector field on the whole reference domain, the perturbation approach 
only requires a vector field on the boundary. A correspondence between 
the two approaches is given by the fact that every vector field on the 
boundary $\Gamma$ can smoothly be extended into the exterior
$D^c$ in such a way that it vanishes on the compactum $G$.

Given an incident wave $\bfE^i$, the scattering problem on the introduced
random domains $D_{\varepsilon}^c(\omega)$ reads
\begin{align}\label{eq:randmaxwell}
\begin{aligned}
\bcurl\bcurl \bfE_{\varepsilon}^s(\omega)-\kappa^2\bfE_{\varepsilon}^s(\omega) ={}&\textbf{0}\quad\text{in}~D_{\varepsilon}^c(\omega),\\
\bfga_t^+\bfE_{\varepsilon}^s(\omega) ={}&\bfg\quad\text{on}~\partial D_{\varepsilon}^c(\omega),
\end{aligned}
\end{align}
with $\bfg=-\bfga_t^+\bfE^i$ and complemented by Silver-M\"uller radiation conditions.
We note the dependence of the scattered
wave on $\varepsilon$ and $\omega$ and that the tangential trace of the incident
wave is taken onto the perturbed boundary $\partial D_{\varepsilon}^c(\omega)$.

A special case is the solution
of the scattering problem on the unperturbed reference domain $D_0^c$, which is
given by
\begin{align}\label{eq:refmaxwell}
\begin{aligned}
\bcurl\bcurl \bfE_0^s-\kappa^2\bfE_0^s ={}&\mathbf{0}\quad\text{in}~D_0^c,\\
\bfga_t^+\bfE_0^s ={}&\bfg\quad\text{on}~\Gamma,
\end{aligned}
\end{align}
and complemented by Silver-M\"uller radiation conditions.
We denote the total electric field around $D_0^c$ by $\bfE_0=\bfE_0^i+\bfE_0^s$.

\subsection{Shape Calculus for Parametrized Domains}
To deal with the non-linear dependence of $\bfE^s_{\varepsilon}(\omega)$ on
$D_{\varepsilon}^c(\omega)$, we may exploit that the dependence is
Fr\'echet-differentiable, see \cite{CLL12, CLL12b, Het12, HL18, Pot96}. More precisely,
given the reference domain $D_0$ and the vector field $\bfV\in
L_\mathbb{P}^2(\Omega; C^{2,1}(\Gamma;\mathbb{R}^3))$ with
$\|\bfV(\omega,\cdot)\| _{C^{2,1}(\Gamma;\mathbb{R}^3)}
\lesssim 1$ uniformly for all $\omega\in\Omega$, one can expand
$\bfE^s_{\varepsilon}(\omega)$ into a \emph{shape Taylor expansion}
\begin{align}\label{eq:shapeTaylor}
\bfE^s_{\varepsilon}(\omega, \mathbf{x}) = \bfE^s_0(\mathbf{x})+\varepsilon\delta\bfE^s[\bfV
(\omega)](\mathbf{x})+\frac{\varepsilon^2}{2}\delta^2\bfE^s[\bfV(\omega),
\bfV(\omega)](\mathbf{x})+\mathbf{R}_2(\varepsilon\bfV(\omega))(\mathbf{x}),\quad\mathbf{x}\in G,
\end{align}
which holds for all $0<\varepsilon\leq\varepsilon_0$ for some 
$\varepsilon_0 >0$ small enough. Here, the \emph{first order
local shape derivative} $\delta\bfE^s[\bfV(\omega)]$ is given as the
solution of
\begin{align}\label{eq:firstlocaldu}
\begin{aligned}
\big(\bcurl\bcurl-\kappa^2\big)\delta\bfE^s[\bfV(\omega)] ={}& \textbf{0}&&\text{in}~D_0^c,\\
\bfga_t^+\delta\bfE^s[\bfV(\omega)] ={}&\bfg^{(\delta,s)}[\bfV(\omega)]&&\text{on}~\Gamma,
\end{aligned}
\end{align}
satisfying the Silver-M\"uller radiation conditions and suitable boundary conditions $\bfg^{(\delta,s)}[\bfV(\omega)]$ which we
will discuss in the next subsection.
Given a second vector field $\bfV'\in L_\P^2(\Omega;C^{2,1}(\Gamma;\R^3))$
for which it holds $\|\bfV'(\omega,\cdot)\| _{C^{2,1}(\Gamma;\R^3)}\lesssim 1$
uniformly for all $\omega\in\Omega$, we will prove that the \emph{second order local shape
derivative} $\delta^2\bfE[\bfV(\omega),\mathbf{V'}(\omega)]$ is given as the solution of
\begin{align}\label{eq:secondlocaldu}
\begin{aligned}
\big(\bcurl\bcurl-\kappa^2\big)\delta^2\bfE^s[\bfV(\omega),\mathbf{V'}(\omega)]
={}&\textbf{0}&&\text{in}~D_0,\\
\delta^2\bfE^s[\bfV(\omega),\mathbf{V'}(\omega)]
={}&\bfg^{(\delta^2,s)}[\bfV(\omega),\bfV'(\omega)]&&\text{on}~\Gamma,
\end{aligned}
\end{align}
with certain boundary conditions $\bfg^{(\delta^2,s)}[\bfV(\omega),\bfV'(\omega)]$
and Silver-M\"uller radiation conditions towards infinity.

By definition of the Fr\'echet derivative, the
remainder $\bfR_2$ in \eqref{eq:shapeTaylor} is uniformly in
$\varepsilon\bfV(\omega)$ negligible with respect to
$\|\varepsilon\bfV(\omega)\|_{C^{3,1}(\Gamma;\R^3)}^2$, i.e.,
$\mathbf{R}_2(\varepsilon\bfV(\omega))=o\big(\|\varepsilon\bfV(\omega)\|_{C^{3,1}(\Gamma;\R^3)}^2\big)$. To arrive at the asymptotic
\begin{align}\label{eq:correctionterm}
\mathbf{R}_2(\varepsilon\bfV(\omega))
=
\mathcal{O}(\varepsilon^3),
\end{align}
we have to assume that the second order correction term satisfies a Lipschitz condition.
This was proven for scalar valued cases in \cite{Dam02}, but, to the best of the author's
knowledge, is unproven for the present case. Together 
with the assumption that $\|\bfV(\omega,\cdot)\|
_{C^{3,1}(\Gamma;\mathbb{R}^n)}\lesssim 1$ uniformly for all
$\omega\in\Omega$, the Lipschitz assumption yields that the constant in \eqref{eq:correctionterm} is independent
of $\omega$. Numerical experiments indicate that this Lipschitz condition is
likely to be satisfied.

\subsection{The Second Local Shape Derivative}
Infinite Fr\'echet differentiability of the scattered electromagnetic field was proven
first in \cite[Theorem 6]{Pot96}. The following characterization is \cite[Theorem 7]{Pot96},
where we write
\[
\bfB^{2,1}_{\varepsilon}(\Gamma)\coloneqq\big\{\bfV\in C^{2,1}(\Gamma;\R^3)\colon\|\bfV\|_{C^{2,1}(\Gamma;\R^3)}<\varepsilon\big\}.
\]
\begin{theorem}\label{thm:potthastthm}
Let $\Gamma$ be a boundary of class $C^{2,1}$. Consider for each
$\bfV\in\bfB^{2,1}_{\varepsilon}(\Gamma)$
the solution $\bfE^s[\bfV]$ to the exterior
problem \eqref{problem::ext_scattering} for the domain $D[\bfV]$ given by
$\partial D[\bfV]=\Gamma+\bfV$ with boundary values
$\bfg[\bfV]\in CT^{1,1}(\partial D[\bfV];\R^3)$, i.e., $\bfE^s[\bfV]$ solves the
electric wave equation in the open exterior of $D[\bfV]$, satisfies the
Silver-M\"uller radiation condition and the boundary condition
$\bfn[\bfV]\times\bfE^s[\bfV]=\bfg[\bfV]$ on $\partial D[\bfV]$. Assume that
the mapping
\[
\bfB^{2,1}_{\varepsilon}(\Gamma)
\to C^{1,1}(\Gamma),
\quad
\bfh\to\bfg[\bfh]
\]
is Fr\'echet differentiable. Then the mapping
\[
\bfB^{2,1}_{\varepsilon}(\Gamma)\to C(G),
\quad
\bfh\to\bfE^s[\bfh]\]
is Fr\'echet differentiable. The Fr\'echet derivative $\delta\bfE^s[\bfV]$
in the point $\bfh=\mathbf{0}$ solves the exterior scattering problem
\eqref{problem::ext_scattering} with boundary values
\[
\bfn\times\delta\bfE^s[\bfV]=\bfg^{(\delta,s)}[\bfV],
\]
given by
\[
\bfg^{(\delta,s)}[\bfV]
=
\bfga_0^+\big(\delta\bfg[\bfV]-\delta\bfn[\bfV]\times\bfE^s\big)
-
\bfn\times\frac{\partial\bfE^s}{\partial\bfV}\qquad\text{on}~\Gamma.
\]
\end{theorem}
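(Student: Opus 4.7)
The plan is to reduce the moving-boundary problem to one on the fixed reference exterior $D_0^c$ by a pull-back and then to extract Fr\'echet differentiability of the solution from the implicit function theorem applied to the resulting parameter-dependent operator equation.

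First, for each $\bfV\in\bfB^{2,1}_\varepsilon(\Gamma)$ I would construct a $C^{2,1}$-diffeomorphism $\Phi[\bfV]\colon\overline{D_0^c}\to\overline{D[\bfV]^c}$ that coincides with $\Id+\bfV$ on $\Gamma$ and with the identity on the compactum $G$. Such an extension is obtained by a standard tubular-neighbourhood/partition-of-unity construction, the assignment $\bfV\mapsto\Phi[\bfV]$ is itself Fr\'echet differentiable, and for $\varepsilon$ small enough $\Phi[\bfV]$ is a genuine $C^{2,1}$-diffeomorphism. Pulling back $\bfE^s[\bfV]$ via the covariant Piola transform then turns \eqref{problem::ext_scattering} on $D[\bfV]^c$ into an equivalent boundary value problem on the fixed $D_0^c$ whose coefficients depend polynomially on $D\Phi[\bfV]$, $(D\Phi[\bfV])^{-1}$ and $\det D\Phi[\bfV]$, hence smoothly on $\bfV$. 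The Silver-M\"uller radiation condition is preserved since $\Phi[\bfV]=\Id$ on $G$, and the Dirichlet data $\bfg[\bfV]$ pulls back to a Fr\'echet-differentiable family of data on $\Gamma$ by the hypothesis on $\bfh\mapsto\bfg[\bfh]$.

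At $\bfV=\bfNull$ the pulled-back operator reduces to the unperturbed problem \eqref{problem::ext_scattering}, which is an isomorphism by the well-posedness from \cite{BH03}. The implicit function theorem in Banach spaces then yields a Fr\'echet-differentiable map $\bfV\mapsto\widetilde{\bfE}^s[\bfV]$ of pulled-back solutions; since $\Phi[\bfV]=\Id$ on $G$, this transfers verbatim to $\bfV\mapsto\bfE^s[\bfV]|_G$. To characterize the boundary data of $\delta\bfE^s[\bfV]$, I would differentiate the pointwise identity
\[
\bfn[\bfV]\big(\bfx_0+\bfV(\bfx_0)\big)\times\bfE^s[\bfV]\big(\bfx_0+\bfV(\bfx_0)\big)=\bfg[\bfV]\big(\bfx_0+\bfV(\bfx_0)\big)
\]
at $\bfV=\bfNull$ along the trajectory $\bfx_0\mapsto\bfx_0+\bfV(\bfx_0)$. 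The chain rule splits the total derivative of $\bfE^s[\bfV]\circ(\Id+\bfV)$ into the local shape derivative $\delta\bfE^s[\bfV]$ on $\Gamma$ plus the convective contribution $\tfrac{\partial\bfE^s}{\partial\bfV}$ (the directional derivative of the reference solution in direction $\bfV$ along $\Gamma$). Rearranging and applying the tangential projection $\bfga_0^+$ to the two summands whose tangentiality is not automatic then reproduces the stated formula for $\bfg^{(\delta,s)}[\bfV]$.

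The main obstacle is making the convective term rigorous: $\tfrac{\partial\bfE^s}{\partial\bfV}$ requires a trace of $\bgrad\bfE^s$ on $\Gamma$, which is the reason for imposing $C^{2,1}$-regularity on $\Gamma$ and matching regularity on $\bfV$. Combined with the mapping properties of the Maxwell layer potentials from \cite{BH03}, this pushes $\bfE^s$ into a smoothness class in which the boundary trace of the gradient is well-defined and the chain rule on the moving boundary is legitimate. Once these regularity issues are settled, the identification of $\bfg^{(\delta,s)}[\bfV]$ reduces to a careful bookkeeping of the derivative. I expect the same pull-back framework, iterated once more, to deliver the second local shape derivative required for \eqref{eq:secondlocaldu}.
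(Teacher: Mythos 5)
The paper does not actually prove this statement: it is imported verbatim as \cite[Theorem 7]{Pot96}, so the only proof to compare against is Potthast's original one, and your proposal takes a genuinely different route. Potthast argues through boundary integral equations: the scattered field is represented by layer potentials on the perturbed boundary, the resulting boundary integral operators are parametrized back over the fixed reference surface $\Gamma$, their Fr\'echet differentiability in the perturbation is proved by differentiating the parametrized kernels, and differentiability of $\bfV\mapsto\bfE^s[\bfV]$ then follows from differentiability of the inverse of a perturbed isomorphism together with the smoothness of the potentials away from $\Gamma$; the formula for $\bfg^{(\delta,s)}[\bfV]$ is finally read off by differentiating the boundary condition along the moving boundary, exactly as in your last step. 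You instead use the domain-mapping method (Piola pull-back to $D_0^c$ plus the implicit function theorem). Both are standard and both reach the theorem; the integral-equation route delivers the $C(G)$ statement essentially for free and is the natural companion to the boundary element discretization used later in the paper, while your transformation argument iterates more transparently to the second derivative needed in \eqref{eq:secondlocaldu} and splits the material derivative into $\delta\bfE^s[\bfV]$ plus the convective term $\partial\bfE^s/\partial\bfV$ in exactly the way the stated formula requires.

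Two points in your outline need more care. First, the implicit function theorem needs a well-posed operator equation on a fixed Banach space, but $D_0^c$ is unbounded and the Silver--M\"uller condition is not encoded in an $\bfH_\loc(\bcurl,D_0^c)$-type norm; you must either truncate with an exact transparent (Dirichlet-to-Neumann or Calder\'on) condition on an artificial sphere enclosing the support of $\Phi[\bfV]-\Id$, or run the argument directly on the boundary integral formulation --- at which point you have essentially reconstructed Potthast's proof. Second, the implicit function theorem gives differentiability of the pulled-back solution in an energy norm, whereas the theorem asserts differentiability into $C(G)$; an additional interior-regularity or representation-formula step is needed to upgrade, which is routine because $G$ has positive distance to all perturbed boundaries, but it should be stated. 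With those repairs the argument is sound, including the final identification of $\bfg^{(\delta,s)}[\bfV]$, where your observation that $\bfga_0^+$ need only be applied to the summands whose tangentiality is not automatic matches the statement.
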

The first and the second Fr\'echet derivative of $\bfn$ can be characterized as follows.
\begin{lemma}\cite[Lemma~4.3]{CLL12}\label{lem:nder}
For the first and second Fr\'echet derivative of $\bfn$ it holds
\begin{align*}
\delta\bfn[\bfV]
=&{}
-(\nabla_{\Gamma}\bfV)\bfn,\\
\delta^2\bfn[\bfV,\bfV']
=&{}
(\nabla_{\Gamma}\bfV')(\nabla_{\Gamma}\bfV)\bfn
+(\nabla_{\Gamma}\bfV)(\nabla_{\Gamma}\bfV')\bfn
-\big\langle(\nabla_{\Gamma}\bfV)\bfn,(\nabla_{\Gamma}\bfV')\bfn\big\rangle\bfn.
\end{align*}
\end{lemma}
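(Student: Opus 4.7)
The plan is to compute $\bfn[\bfV]$ explicitly and Taylor-expand at $\bfV=\bfNull$. I would first pick a normal-constant extension of $\bfV$ to a tubular neighbourhood of $\Gamma$, so that $\partial_{\bfn}\bfV\equiv\bfNull$ on $\Gamma$; this is legitimate since shape derivatives of geometric quantities depend only on $\bfV|_\Gamma$. With the perturbed boundary $\Gamma[\bfV]=(\Id+\bfV)(\Gamma)$, the classical pull-back formula for the exterior unit normal reads
\[
\bfn[\bfV]\circ(\Id+\bfV) \;=\; \frac{\bfm[\bfV]}{\|\bfm[\bfV]\|},\qquad \bfm[\bfV]\isdef (\Id+D\bfV)^{-T}\bfn,
\]
where $D\bfV$ denotes the ambient Jacobian. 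A short index calculation shows that on $\Gamma$ the matrix $(D\bfV)^T$ agrees with the surface gradient matrix $\nabla_\Gamma\bfV$ of the preliminaries: for the normal-constant extension one has $\partial_j V_i = (\nabla_\Gamma V_i)_j$, hence $(D\bfV)^T_{ji}=(\nabla_\Gamma V_i)_j$, which is the definition of $\nabla_\Gamma\bfV$. As a byproduct $(\nabla_\Gamma\bfV)\bfn=\sum_i n_i\,\nabla_\Gamma V_i$ is tangential, and $\nabla_\Gamma\bfV$ maps tangent vectors to tangent vectors.

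Next, the Neumann series $(\Id+A)^{-T}=\Id-A^T+(A^T)^2+\cO(\|A\|^3)$ yields, on $\Gamma$,
\[
\bfm[\bfV] \;=\; \bfn - (\nabla_\Gamma\bfV)\bfn + (\nabla_\Gamma\bfV)^2\bfn + \cO(\|\bfV\|^3).
\]
Substituting $\bfV\leftarrow t\bfV+s\bfV'$ and reading off the coefficients of $t|_{t=0}$ and of $\partial_t\partial_s|_{t=s=0}$ gives
\[
\delta\bfm[\bfV] = -(\nabla_\Gamma\bfV)\bfn,\qquad \delta^2\bfm[\bfV,\bfV'] = (\nabla_\Gamma\bfV)(\nabla_\Gamma\bfV')\bfn + (\nabla_\Gamma\bfV')(\nabla_\Gamma\bfV)\bfn,
\]
both of which are tangential by the observation above.

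Finally, I would push these through the normalisation $\bfn[\bfV]=\bfm[\bfV]/\|\bfm[\bfV]\|$. Since $\delta\bfm[\bfV]$ is tangential, $\delta\|\bfm\|[\bfV]=\langle\bfn,\delta\bfm[\bfV]\rangle=0$, so the normalisation correction drops out at first order and $\delta\bfn[\bfV]=\delta\bfm[\bfV]=-(\nabla_\Gamma\bfV)\bfn$, which is the first claim. At second order the vanishing of $\delta\|\bfm\|$ also kills the mixed terms in the product rule for $\bfm/\|\bfm\|$, leaving
\[
\delta^2\bfn[\bfV,\bfV'] \;=\; \delta^2\bfm[\bfV,\bfV'] \;-\; \bfn\,\delta^2\|\bfm\|[\bfV,\bfV'].
\]
The first summand is already the tangential part of the stated formula; for the normal component, I would differentiate the constraint $\|\bfn[\bfV]\|^2\equiv 1$ twice at $\bfV=\bfNull$, obtaining
\[
\langle\bfn,\delta^2\bfn[\bfV,\bfV']\rangle \;=\; -\langle\delta\bfn[\bfV],\delta\bfn[\bfV']\rangle \;=\; -\bigl\langle(\nabla_\Gamma\bfV)\bfn,(\nabla_\Gamma\bfV')\bfn\bigr\rangle,
\]
which reproduces the last term of the claim. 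The only genuinely delicate step is the identification $(D\bfV)^T|_\Gamma=\nabla_\Gamma\bfV$ under the chosen extension and the concurrent verification that $\delta\bfm$ and $\delta^2\bfm$ land in the tangent bundle; once these are in place, the rest is routine Neumann-series algebra combined with the product rule for $\bfm/\|\bfm\|$.
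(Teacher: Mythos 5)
Your proof is correct, and it supplies an argument where the paper gives none: Lemma~\ref{lem:nder} is imported verbatim from \cite[Lemma~4.3]{CLL12} without an in-paper derivation, so there is no internal proof to compare against. Your route --- pull back the unit normal via the cofactor formula $\bfn[\bfV]\circ(\Id+\bfV)=(\Id+D\bfV)^{-T}\bfn/\|(\Id+D\bfV)^{-T}\bfn\|$, expand the inverse transpose in a Neumann series, polarize in $t\bfV+s\bfV'$, and then correct for the normalisation --- is the standard elementary derivation, and every step checks out. The two points you flag as delicate are indeed the ones to watch, and both hold: with the paper's column convention, $\bgrad_{\Gamma}\bfV$ maps \emph{every} vector (not only tangent ones) into the tangent space, since each column is a surface gradient; this is what makes $\delta\bfm$ and $\delta^2\bfm$ tangential and collapses the normalisation corrections to the single normal term $-\langle(\bgrad_{\Gamma}\bfV)\bfn,(\bgrad_{\Gamma}\bfV')\bfn\rangle\bfn$. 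One remark worth making explicit in a write-up: the unnormalised $\bfm[\bfV]$ and its second derivative genuinely depend on the chosen extension --- for a general extension $(D\bfV)^{T}=\bgrad_{\Gamma}\bfV+\bfn\,(\partial\bfV/\partial\bfn)^{T}$ on $\Gamma$, and the quadratic term of the Neumann series then acquires tangential contributions proportional to $\partial\bfV/\partial\bfn$ --- so the normal-constant choice is not cosmetic. It is justified exactly as you say: the normalised, pulled-back normal is a function of $\bfV|_{\Gamma}$ alone, hence so are all of its Fr\'echet derivatives, and one may therefore evaluate them with any convenient extension. Your use of the constraint $\|\bfn[\bfV]\|^{2}\equiv1$ to read off the normal component of $\delta^{2}\bfn$ is a clean shortcut that avoids computing $\delta^{2}\|\bfm\|$ directly.
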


A consequence of theorem \ref{thm:potthastthm} and this lemma is the following characterization
of the first local shape derivative for the electromagnetic scattering
problem, see \cite[Corollary 8]{Pot96} and \cite[Theorem 6.6]{CLL12b}.
\begin{theorem}\label{thm:firstlocal}
	Let $\Gamma$ be a boundary of class $C^{2,1}$. The
	first local shape derivative of \eqref{problem::ext_scattering} in direction
	$\bfV\in C^{2,1}(\Gamma;\R^3)$ is the solution of \eqref{eq:firstlocaldu}
	with
	\[
	\bfg^{(\delta,s)}[\bfV]
	=
	-
	\delta\bfn[\bfV]\times\bfE_0
	-
	\bfn\times\frac{\partial\bfE_0}{\partial\bfV},
	\]
	where $\bfE_0=\bfE^i+\bfE^s_0$ is the solution to the original scattering problem \eqref{eq:refmaxwell} on the domain
	$D_0^c$. Moreover, there holds the alternate representation
	\[
	\bfg^{(\delta,s)}[\bfV]
	=
	-
	\langle\bfV,\bfn\rangle\bfga_{N}^{+}\bfE_0\times\bfn
	+
	\bcurlg\big(\langle\bfV,\bfn\rangle\langle\bfE_0,\bfn\rangle\big).
	\]
\end{theorem}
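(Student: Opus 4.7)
The plan is to derive the two representations in sequence. For the first formula, I would apply Theorem~\ref{thm:potthastthm} to the concrete boundary data $\bfg[\bfV] = -\bfn[\bfV]\times\bfE^i$, i.e.\ minus the rotated tangential trace of the fixed incident wave on the perturbed boundary $\partial D[\bfV]$. Computing $\delta\bfg[\bfV]$ by the chain rule, the explicit $\bfV$-dependence through $\bfn[\bfV]$ gives $-\delta\bfn[\bfV]\times\bfE^i$, while transport of $\bfE^i$ along the boundary perturbation contributes $-\bfn\times\partial\bfE^i/\partial\bfV$. Plugging this into the formula of Theorem~\ref{thm:potthastthm}, the two $\delta\bfn[\bfV]\times(\cdot)$ terms fuse using $\bfE_0 = \bfE^i + \bfE^s_0$, and the same happens for the normal-derivative contributions, yielding exactly the first expression for $\bfg^{(\delta,s)}[\bfV]$.

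For the alternate representation, I would first invoke the Hadamard structure of shape derivatives to reduce to $\bfV = \lVnr\bfn$ on $\Gamma$, so that only the normal component enters. With this choice Lemma~\ref{lem:nder} simplifies: writing out $[\nabla_\Gamma(\lVnr\bfn)]\bfn = \nabla_\Gamma\lVnr + \lVnr(\nabla_\Gamma\bfn)\bfn$ and using $(\nabla_\Gamma\bfn)\bfn = \tfrac{1}{2}\nabla_\Gamma|\bfn|^2 = 0$, one obtains $\delta\bfn[\lVnr\bfn] = -\nabla_\Gamma\lVnr$. The Dirichlet condition $\bfga_t^+\bfE^s_0 = -\bfga_t^+\bfE^i$ in \eqref{eq:refmaxwell} forces $\bfga_t^+\bfE_0 = 0$, so on $\Gamma$ the total field is purely normal, $\bfE_0 = \langle\bfE_0,\bfn\rangle\bfn$. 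Using $\bcurl_\Gamma u = \nabla_\Gamma u\times\bfn$ then yields
\[
-\delta\bfn[\bfV]\times\bfE_0 = \nabla_\Gamma\lVnr \times \langle\bfE_0,\bfn\rangle\bfn = \langle\bfE_0,\bfn\rangle\,\bcurl_\Gamma\lVnr.
\]

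The crux is the surface identity
\[
\bfga_N^+\bfE_0\times\bfn = \bcurl_\Gamma\langle\bfE_0,\bfn\rangle + \bfn\times\frac{\partial\bfE_0}{\partial\bfn}.
\]
I would derive it by expanding $\bfga_N^+\bfE_0\times\bfn = (\bfn\times\bcurl\bfE_0)\times\bfn = \bcurl\bfE_0 - \langle\bcurl\bfE_0,\bfn\rangle\bfn$, the tangential projection of $\bcurl\bfE_0$, and comparing tangential components in a local frame adapted to $\bfn$; the two sides match precisely because $\bfE_0$ has vanishing tangential part on $\Gamma$, so tangential derivatives of the tangential components of $\bfE_0$ drop out. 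Combining this with the Leibniz rule $\bcurl_\Gamma(\lVnr\langle\bfE_0,\bfn\rangle) = \langle\bfE_0,\bfn\rangle\bcurl_\Gamma\lVnr + \lVnr\bcurl_\Gamma\langle\bfE_0,\bfn\rangle$ reassembles the two summands of the first representation into the desired form $-\lVnr\,\bfga_N^+\bfE_0\times\bfn + \bcurl_\Gamma(\lVnr\langle\bfE_0,\bfn\rangle)$.

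The main obstacle is precisely the surface identity above: it is where the perfectly conducting boundary condition (expressed as $\bfE_0$ being purely normal on $\Gamma$) interlocks with the surface differential calculus to convert a normal derivative of a purely-normal vector field into a surface curl of its normal component. Once this identity is in hand, the remainder of the argument is bookkeeping via the product rule.
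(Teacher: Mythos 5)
Your proposal is correct. For the first representation you follow essentially the paper's route: insert $\bfg[\bfV](\bfx)=-\bfn[\bfV](\bfx)\times\bfE^i(\bfx+\bfV(\bfx))$ into Theorem~\ref{thm:potthastthm}, differentiate by the chain rule, and merge the incident and scattered contributions into $\bfE_0$. The one step you leave implicit is that $\bfga_0^+$ must act as the identity on $\delta\bfg[\bfV]-\delta\bfn[\bfV]\times\bfE^s$ before the terms inside and outside that projection can be fused; the paper states this explicitly, and it holds because $\bfE_0=\langle\bfE_0,\bfn\rangle\bfn$ makes $\delta\bfn[\bfV]\times\bfE_0$ tangential, while the $\bfn\times(\cdot)$ terms are tangential by construction. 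For the alternate representation the paper simply cites \cite[Theorem 6.6]{CLL12b}, whereas you give a self-contained derivation: reduce to $\bfV=\lVnr\bfn$, compute $\delta\bfn[\lVnr\bfn]=-\bgrad_\Gamma\lVnr$, and use the surface identity $\bfga_N^+\bfE_0\times\bfn=\bcurl_\Gamma\langle\bfE_0,\bfn\rangle+\bfn\times\partial\bfE_0/\partial\bfn$. That identity is indeed valid here: it is \cite[Eq.~(2.5.211)]{Ned01} (the paper's \eqref{eq:curlformula}) with the term $([\bgrad_\Gamma\bfn]\bfE_0)\times\bfn$ dropping out because $\bfE_0$ is purely normal and $[\bgrad_\Gamma\bfn]\bfn=\bfNull$; together with the Leibniz rule for $\bcurl_\Gamma$ this reassembles exactly the claimed formula. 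In effect you carry out the computation that the paper defers to the literature and later reuses in the proof of Theorem~\ref{thm:main}, which makes the argument more self-contained. The only soft spot is the appeal to the Hadamard structure theorem to discard the tangential part of $\bfV$: this is standard for smooth domains, but it is the one claim you invoke without verifying it directly from the first representation, and a fully rigorous write-up should either cite the structure theorem precisely or check that the first formula is invariant under tangential perturbations.
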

\begin{proof}
	The result is obtained by setting
	\[
	\bfg[\bfV](\bfx)=-\bfn[\bfV](\bfx)\times\bfE^i(\bfx+\bfV(\bfx)),\quad\bfx\in\Gamma,
	\]
	in theorem~\ref{thm:potthastthm}. The operator $\bfga_0^+$ acts as the identity, since
	$\bfE_0=\langle\bfE_0,\bfn\rangle\bfn$ and thus the normal components vanish. The alternate
	representation is proven in \cite[Theorem 6.6]{CLL12b}.
\end{proof}
We remark that due to
\[
\bcurl\bcurl\bfE=\kappa^2\bfE
\]
and a diagram chase in \eqref{eq:diagram} it follows that it holds
\begin{align}\label{eq:Enrep}
\langle\bfE,\bfn\rangle
=
-\kappa^{-2}\div_{\Gamma}(\bfga_{N}^{+}\bfE)
\qquad\text{on}~\Gamma
\end{align}
for all solutions $\bfE$ of the above electromagnetic scattering problems.
Thus, $\langle\bfE_0,\bfn\rangle$ can be computed from $\bfga_{N}^+\bfE_0$,
which is accessible when using numerical methods.

\begin{theorem}\label{cor:secder}
Let $\Gamma$ be a boundary of class $C^{3,1}$.
Then, the boundary values of the second derivative \eqref{eq:secondlocaldu} in directions
$\bfV,\bfV'\in C^{3,1}(\Gamma;\R^3)$ are given by
\begin{align}\label{eq:secondgeneral}
\begin{aligned}
\bfg^{(\delta^2,s)}[\bfV,\bfV']
={}&
-\delta^2\bfn[\bfV,\bfV']\times\bfE_0
-\delta\bfn[\bfV]\times\delta\bfE^s[\bfV']
-\delta\bfn[\bfV']\times\delta\bfE^s[\bfV]
-\delta\bfn[\bfV]\times\frac{\partial\bfE_0}{\partial\bfV'}\\
&\qquad
-\delta\bfn[\bfV']\times\frac{\partial\bfE_0}{\partial\bfV}
-\bfn\times\frac{\partial\big(\delta\bfE^s[\bfV']\big)}{\partial\bfV}
-\bfn\times\frac{\partial\big(\delta\bfE^s[\bfV]\big)}{\partial\bfV'}
-\bfn\times\frac{\partial^2\bfE_0}{\partial\bfV'\partial\bfV}.
\end{aligned}
\end{align}
\end{theorem}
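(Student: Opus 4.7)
The plan is to differentiate the pulled-back boundary condition one order higher than in Theorem~\ref{thm:firstlocal}, mirroring Potthast's argument. Since $\bfE^s[\bfV]$ satisfies $\bfn \times \bfE_\mathrm{tot} = \mathbf{0}$ on $\partial D[\bfV]$ with $\bfE_\mathrm{tot} = \bfE^s + \bfE^i$, the pulled-back identity
\[
F(\bfV)(\bfx) \isdef \bfn[\bfV](\bfx) \times H(\bfV)(\bfx) = \mathbf{0}, \quad H(\bfV)(\bfx) \isdef \bfE_\mathrm{tot}[\bfV]\bigl(\bfx+\bfV(\bfx)\bigr), \quad \bfx \in \Gamma,
\]
holds for every admissible perturbation. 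The first derivative $\delta F[\bfV] = \mathbf{0}$ reproduces the expression for $\bfga_t^+\delta\bfE^s[\bfV]$ of Theorem~\ref{thm:firstlocal}; the strategy is to exploit $\delta^2 F[\bfV,\bfV'] = \mathbf{0}$ in the same fashion to recover $\bfga_t^+\delta^2\bfE^s[\bfV,\bfV']$.

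First I would write out the Leibniz rule for the second Fréchet derivative of the product $\bfn[\bfV]\times H(\bfV)$ at $\bfV=\mathbf{0}$:
\[
\mathbf{0} = \delta^2\bfn[\bfV,\bfV']\times\bfE_0 + \delta\bfn[\bfV]\times\delta H[\bfV'] + \delta\bfn[\bfV']\times\delta H[\bfV] + \bfn\times\delta^2 H[\bfV,\bfV'],
\]
where $\delta\bfn,\delta^2\bfn$ are already supplied by Lemma~\ref{lem:nder} and $H_0=\bfE_0$. The crux is to compute the material derivatives $\delta H$ and $\delta^2 H$, which must simultaneously handle the $\bfV$-dependence of the field $\bfE_\mathrm{tot}[\bfV]$ and of the evaluation point $\bfx+\bfV(\bfx)$. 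A direct chain rule yields the first material derivative $\delta H[\bfV] = \delta\bfE^s[\bfV] + \partial\bfE_0/\partial\bfV$, already implicit in Theorem~\ref{thm:firstlocal}. For the second material derivative I would parametrize $\bfV = s\bfW + r\bfW'$, evaluate the mixed partial $\partial_s\partial_r$ at the origin, and collect the four types of contributions (one pure shape derivative, two mixed shape/spatial terms, and one pure spatial Hessian), obtaining
\[
\delta^2 H[\bfV,\bfV'] = \delta^2\bfE^s[\bfV,\bfV'] + \frac{\partial(\delta\bfE^s[\bfV'])}{\partial\bfV} + \frac{\partial(\delta\bfE^s[\bfV])}{\partial\bfV'} + \frac{\partial^2\bfE_0}{\partial\bfV'\partial\bfV}.
\]
Substituting into the Leibniz identity and solving for $\bfn\times\delta^2\bfE^s[\bfV,\bfV']$ yields exactly the eight-term right-hand side~\eqref{eq:secondgeneral}. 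That $\delta^2\bfE^s[\bfV,\bfV']$ itself satisfies the electric wave equation $(\bcurl\bcurl-\kappa^2)\delta^2\bfE^s=\mathbf{0}$ in $D_0^c$ and the Silver-M\"uller radiation condition follows by differentiating the corresponding statement for $\delta\bfE^s$ in~\eqref{eq:firstlocaldu} once more in $\bfV$, since both the differential operator and the far-field condition are linear and independent of $\bfV$.

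The main obstacle is justifying the existence and continuity of the second Fréchet derivative of the shape-to-solution map $\bfV\mapsto\bfE^s[\bfV]$, together with the interchange of the $\bfV$-differentiation and the spatial differentiation that produces the mixed terms above. This is precisely why the regularity of $\Gamma$ is strengthened from $C^{2,1}$ in Theorem~\ref{thm:potthastthm} to $C^{3,1}$ here: Potthast's infinite Fréchet differentiability result \cite[Theorem~6]{Pot96}, applied one further level, requires exactly this extra order of smoothness so that the Jacobian $\nabla(\delta\bfE^s[\bfV])$ and the spatial Hessian $\nabla^2\bfE_0$ appearing in $\delta^2 H$ are well-defined continuous traces on $\Gamma$, and so that the resulting boundary datum lies in the correct Dirichlet trace space for the exterior Maxwell problem to be solvable.
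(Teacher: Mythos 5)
Your proof is correct, but it takes a genuinely different route from the paper. The paper obtains \eqref{eq:secondgeneral} by iterating Theorem~\ref{thm:potthastthm}: it regards $\delta^2\bfE^s[\bfV,\bfV']$ as the first shape derivative of $\delta\bfE^s[\bfV]$ with boundary data $\bfh'\mapsto\bfg^{(\delta,s)}[\bfV][\bfh']$, applies the Leibniz rule to that intermediate expression, and ends up with a formula in which several terms are wrapped in the tangential projector $\bfga_0^+$; it then needs the separate cancellation argument \eqref{eq:crossscalformula}--\eqref{eq:normalcancels} to show that the normal components of $\delta\bfn[\bfV]\times\delta\bfE^s[\bfV']$ and $\delta\bfn[\bfV]\times\partial\bfE_0/\partial\bfV'$ annihilate each other, so that $\bfga_0^+$ may be dropped and \eqref{eq:secondgeneral} holds without any projection. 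You instead differentiate the full vector-valued pulled-back identity $\bfn[\bfV]\times\bfE_{\mathrm{tot}}[\bfV](\cdot+\bfV(\cdot))=\mathbf{0}$ twice; because this identity vanishes as an element of $\mathbb{C}^3$ and not merely tangentially, no projector ever enters, the $\bfV\leftrightarrow\bfV'$ symmetry of the eight terms is manifest from the start, and the normal-component cancellation that the paper must verify by hand is automatic. Your computation of $\delta H$ and $\delta^2 H$ via the mixed partial $\partial_s\partial_r$ is the correct second-order chain rule and reproduces \eqref{eq:secondgeneral} exactly. What each approach buys: the paper's iteration rests entirely on the already-established Theorem~\ref{thm:potthastthm}, so the existence and form of the derivative are inherited wholesale at the cost of an asymmetric intermediate formula and an extra lemma; your one-shot symmetric computation is shorter and more transparent, but shifts the analytic burden onto justifying the second-order differentiability of $\bfV\mapsto\bfE_{\mathrm{tot}}[\bfV](\cdot+\bfV(\cdot))$ and the interchange of shape and spatial differentiation, which you correctly delegate to Potthast's infinite Fr\'echet differentiability result and the strengthened $C^{3,1}$ regularity. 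That delegation is the only place where your argument is less self-contained than the paper's, and it is acknowledged explicitly, so I see no gap.
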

\begin{proof}
The proof is a repeated application of theorem \ref{thm:potthastthm}. Since the second local shape
derivative is defined as the derivative of the first local shape derivative,
the theorem immediately yields
\[
\bfg^{(\delta^2,s)}[\bfV,\bfV']
=
\bfga_0^+\Big(\delta'\big(\bfg^{(\delta,s)}[\bfV]\big)[\bfV']-\delta\bfn[\bfV']\times\delta\bfE^s[\bfV]\Big)
-
\bfn\times\frac{\partial\big(\delta\bfE^s[\bfV]\big)}{\partial\bfV'},
\]
where we write $\delta'$ to indicate the derivative which is applied in the direction
$\bfV'$. We thus have to compute the derivative of
\[
\bfh'\mapsto\big(\bfg^{(\delta,s)}[\bfV]\big)[\bfh']
\]
at $\bfh'=\mathbf{0}$, with
\[
\big(\bfg^{(\delta,s)}[\bfV]\big)[\bfh']
=
-
\big(\delta\bfn[\bfV]\big)[\bfh']\times\bfE_0[\bfh']
-
\bfn[\bfh']\times\frac{\partial\bfE_0}{\partial\bfV}[\bfh']
\]
which is given by
\begin{align*}
\delta'\big(\bfg^{(\delta,s)}[\bfV]\big)[\bfV']
={}&
-\delta^2[\bfV,\bfV']\times\bfE_0
-\delta\bfn[\bfV]\times\delta\bfE_0[\bfV']
-\delta\bfn[\bfV]\times\frac{\partial\bfE_0}{\partial\bfV'}\\
&\qquad
-\delta\bfn[\bfV']\times\frac{\partial\bfE_0}{\partial\bfV}
-\bfn\times\frac{\partial\big(\delta\bfE^s[\bfV']\big)}{\partial\bfV}
-\bfn\times\frac{\partial^2\bfE_0}{\partial\bfV'\partial\bfV}.
\end{align*}
Putting everything together, exploiting that $\delta\bfE^i[\bfV']=\bfNull$ and that the tangential component of $\bfE_0$ vanishes
yields
\begin{align*}
\bfg^{(\delta^2,s)}[\bfV,\bfV']
={}&
-\delta^2\bfn[\bfV,\bfV']\times\bfE_0\\
&\qquad-\bfga_0^+\bigg(
\delta\bfn[\bfV]\times\delta\bfE^s[\bfV']
+\delta\bfn[\bfV']\times\delta\bfE^s[\bfV]\\
&\qquad\qquad\qquad\qquad\qquad\qquad
+\delta\bfn[\bfV]\times\frac{\partial\bfE_0}{\partial\bfV'}
+\delta\bfn[\bfV']\times\frac{\partial\bfE_0}{\partial\bfV}
\bigg)\\
&\qquad-\bfn\times\frac{\partial\big(\delta\bfE^s[\bfV']\big)}{\partial\bfV}
-\bfn\times\frac{\partial\big(\delta\bfE^s[\bfV]\big)}{\partial\bfV'}
-\bfn\times\frac{\partial^2\bfE_0}{\partial\bfV'\partial\bfV}.
\end{align*}
It remains to show that $\bfga_0^+$ acts as the identity on its argument, i.e., we have
to show that the normal component of its argument vanishes. Therefore, exploiting
\begin{align}\label{eq:crossscalformula}
\langle\bfb\times\bfc,\bfa\rangle\bfa=(\bfa\times\bfb)\times(\bfa\times\bfc),
\end{align}
we compute
\[
\langle\delta\bfn[\bfV]\times\delta\bfE^s[\bfV'],\bfn\rangle\bfn
=
(\bfn\times\delta\bfn[\bfV])\times(\bfn\times\delta\bfE^s[\bfV'])
=
(\bfn\times\delta\bfn[\bfV])\times\bfg^{(\delta,s)}[\bfV'].
\]
\eqref{eq:crossscalformula} yields
\begin{align}\label{eq:normalcancels}
\langle\delta\bfn[\bfV]\times\delta\bfE^s[\bfV'],\bfn\rangle\bfn
=
\langle\underbrace{\bfn\times\bfE_0}_{=0},\delta\bfn[\bfV]\rangle\delta\bfn[\bfV]
-
\bigg\langle\delta\bfn[\bfV]\times\frac{\partial\bfE_0}{\partial\bfV'},\bfn\bigg\rangle\bfn
=
-
\bigg\langle\delta\bfn[\bfV]\times\frac{\partial\bfE_0}{\partial\bfV'},\bfn\bigg\rangle\bfn.
\end{align}
Thus, the normal components of $\delta\bfn[\bfV]\times\delta\bfE^s[\bfV']$ and
$\delta\bfn[\bfV]\times\partial\bfE_0/\partial\bfV'$ cancel. This yields the assertion.
\end{proof}

For numerical computations we have to reformulate the boundary conditions of the previous theorem.
For simplicity, we restrict ourselves to vector fields of the form $\bfV(\omega)=r(\omega)\bfn$ and
$\bfV'(\omega)=r'(\omega)\bfn$ and indicate the dependence on $r$ and $r'$ by $[r]$ and $[r']$, rather
than $[\bfV]$ and $[\bfV']$.
\begin{theorem}\label{thm:main}
Let $\Gamma$ be a boundary of class $C^{3,1}$ and
$\bfV(\omega)=r(\omega)\bfn$, $\bfV'(\omega)=r'(\omega)\bfn$ such that
$\bfV,\bfV'\in C^{3,1}(\Gamma;\R^3)$.
Then, the boundary values of the second derivative in directions
$\bfV,\bfV'$ is a solution of \eqref{eq:secondlocaldu}
with
\begin{align*}
\bfg^{(\delta^2,s)}[r,r']
={}&
-
\big(2H\bfI-\big[\bgrad_{\Gamma}\bfn\big]\big)
\bigg(\kappa^{-2}\div_{\Gamma}(\bfga_{N}^{+}\bfE_0)\Big(
r\big[\bgrad_{\Gamma}r'\big]
+
r'\big[\bgrad_{\Gamma}r\big]
\Big)\times\bfn\\
&\qquad\qquad\qquad\qquad\qquad\qquad+
\Big(
r(\delta\bfE^s[r']\times\bfn)+
r'(\delta\bfE^s[r]\times\bfn)
\Big)\bigg)\\
&\qquad
-
r\bfga_N^+\delta\bfE^s[r']\times\bfn
-
r'\bfga_N^+\delta\bfE^s[r]\times\bfn\\
{}&\qquad
+
\bcurl_{\Gamma}\big(r\langle\delta\bfE^s[r'],\bfn\rangle +r'\langle\delta\bfE^s[r],\bfn\rangle\big)\\
&\qquad+
rr'\Big(
2\kappa^2\bfn\times\big[\bgrad_{\Gamma}\bfn\big]\bgrad_{\Gamma}\div_{\Gamma}\big(\bfga_{N}^+\bfE_0)\\
&\qquad\qquad\qquad+
2\kappa^{-2}\div_{\Gamma}\big(\bfga_{N}^+\bfE_0)\bfn\times\bgrad_\Gamma H
+2H\big(\bfga_N^+\bfE_0\times\bfn-\bcurl_{\Gamma}(\langle\bfE_0,\bfn\rangle)\big)\Big).
\end{align*}
Here, $2H=\tr[\bgrad_{\Gamma}\bfn]$ denotes the mean curvature.
\end{theorem}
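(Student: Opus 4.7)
The plan is to start from the general formula \eqref{eq:secondgeneral} of Theorem~\ref{cor:secder} and specialise each of its eight summands to $\bfV=r\bfn$, $\bfV'=r'\bfn$, with the aim of expressing the result exclusively through the traces $\bfga_t^+$ and $\bfga_N^+$ of $\bfE_0,\delta\bfE^s[r],\delta\bfE^s[r']$ and the intrinsic surface quantities $\bfn,[\bgrad_\Gamma\bfn],H,\bgrad_\Gamma,\bcurl_\Gamma,\div_\Gamma$. The central task is the elimination of the directional normal derivatives $r\partial_n\bfE_0$, $r\partial_n\delta\bfE^s[r']$ and $rr'\partial_n^2\bfE_0$ (the last obtained after taking the normal-constant extension of $r,r'$ off $\Gamma$, so that $\partial_n r=\partial_n r'=0$), since these are not boundary-intrinsic.

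First I would apply Lemma~\ref{lem:nder} together with the product rule $[\bgrad_\Gamma(r\bfn)]=(\bgrad_\Gamma r)\bfn^T+r[\bgrad_\Gamma\bfn]$ and the tangentiality identity $[\bgrad_\Gamma\bfn]\bfn=\bfNull$ (a consequence of differentiating $|\bfn|^2=1$ tangentially) to obtain
\begin{equation*}
\delta\bfn[r\bfn]=-\bgrad_\Gamma r,\qquad\delta^2\bfn[r\bfn,r'\bfn]=r'[\bgrad_\Gamma\bfn]\bgrad_\Gamma r+r[\bgrad_\Gamma\bfn]\bgrad_\Gamma r'-\langle\bgrad_\Gamma r,\bgrad_\Gamma r'\rangle\bfn.
\end{equation*}
Next I would rewrite the normal derivative of an arbitrary Maxwell solution $\bfE$ in boundary-intrinsic form: splitting the Cartesian gradient on $\Gamma$ into its tangential and normal parts, using $\bfga_N^+\bfE=\bfn\times\bcurl\bfE$, and rewriting $\div\bfE=0$ (which follows from $\bcurl\bcurl\bfE=\kappa^2\bfE$) as $\div_\Gamma\bfE_\tau+\partial_n\langle\bfE,\bfn\rangle+2H\langle\bfE,\bfn\rangle=0$ yields a closed formula for $\bfn\times\partial_n\bfE$ in which the shape operator $[\bgrad_\Gamma\bfn]$ enters through $\bgrad_\Gamma\bfn$. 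Iterating this together with the componentwise Helmholtz identity $\Delta\bfE_0=-\kappa^2\bfE_0$ and the standard decomposition $\Delta=\partial_n^2+2H\partial_n+\Delta_\Gamma$ at $\Gamma$ expresses $\partial_n^2\bfE_0$ through surface derivatives of the Cauchy data; the combination of $2H=\tr[\bgrad_\Gamma\bfn]$ from the Laplacian splitting with the shape-operator term from the first derivative produces precisely the prefactor $(2H\bfI-[\bgrad_\Gamma\bfn])$ of the claim.

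Substituting all expressions into \eqref{eq:secondgeneral} and applying \eqref{eq:Enrep} to replace every occurrence of $\langle\bfE_0,\bfn\rangle$ by $-\kappa^{-2}\div_\Gamma(\bfga_N^+\bfE_0)$ accounts for the $\kappa^{\pm 2}$ factors and the $\div_\Gamma(\bfga_N^+\bfE_0)$ kernels in the $rr'$-part. The normal components of $-\delta^2\bfn[r\bfn,r'\bfn]\times\bfE_0$ cancel exactly as in \eqref{eq:normalcancels}, discarding the $-\langle\bgrad_\Gamma r,\bgrad_\Gamma r'\rangle\bfn$ piece. The term $r\bcurl_\Gamma\langle\delta\bfE^s[r'],\bfn\rangle$ produced by $-r\bfn\times\partial_n\delta\bfE^s[r']$ combines with $\langle\delta\bfE^s[r'],\bfn\rangle\bcurl_\Gamma r$ (coming from $\bgrad_\Gamma r\times\delta\bfE^s[r']=-\delta\bfn[r\bfn]\times\delta\bfE^s[r']$) into $\bcurl_\Gamma(r\langle\delta\bfE^s[r'],\bfn\rangle)$, and similarly for the $r\leftrightarrow r'$ partner. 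Gathering all contributions yields the stated expression.

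\textbf{Main obstacle.} The hardest part will be the systematic reduction of $\partial_n\bfE$ and $\partial_n^2\bfE_0$ to surface-intrinsic expressions: choosing a consistent normal extension of $r,r',\bfn$ off $\Gamma$, tracking how the shape operator $[\bgrad_\Gamma\bfn]$ enters through the tangential derivative of the frame, and combining the Helmholtz equation with the tangential-normal splitting to produce exactly the operator $(2H\bfI-[\bgrad_\Gamma\bfn])$. The subsequent bookkeeping—collecting the cross products with $\bfn$, verifying the normal-component cancellations, and matching the $\kappa^{\pm 2}$ powers in the $rr'$-contribution—is lengthy but mechanical once the normal-derivative formulas are in place.
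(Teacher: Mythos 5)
Your plan follows essentially the same route as the paper's proof: specialise \eqref{eq:secondgeneral} to normal fields via Lemma~\ref{lem:nder}, eliminate the normal derivatives $\partial_{\bfn}\bfE_0$, $\partial_{\bfn}\delta\bfE^s$ and $\partial_{\bfn}^2\bfE_0$ through the N\'ed\'elec tangential--normal splittings of $\div$, $\bcurl$ and $\Delta$ combined with the Helmholtz equation and \eqref{eq:Enrep}, exploit the normal-component cancellation \eqref{eq:normalcancels}, and recombine the surface-curl terms exactly as stated. The only ingredients you leave implicit are the Weingarten/Codazzi identity $\bDelta_\Gamma\bfn=\big((2H)^2-2K\big)\bfn+2\bgrad_\Gamma H$, which the paper needs to produce the $\bfn\times\bgrad_\Gamma H$ term, and the fact that the prefactor $(2H\bfI-[\bgrad_\Gamma\bfn])$ arises from the algebraic identity $(\bfA\bfb)\times\bfc+\bfb\times(\bfA\bfc)=(\tr(\bfA)\bfI-\bfA^\intercal)(\bfb\times\bfc)$ applied to the shape operator appearing in the curl splitting, rather than from the Laplacian splitting as you suggest.
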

\begin{proof}
We reformulate the expressions from theorem~\ref{thm:main}.
\begin{itemize}
\item
Lemma~\ref{lem:nder} and a short computation imply immediately that
\[
\delta^2\bfn[r,r']
=
\big[\bgrad_{\Gamma}\bfn\big]\Big(
r\big[\bgrad_{\Gamma}r'\big]
+
r'\big[\bgrad_{\Gamma}r\big]
\Big)
-
\big\langle
\bgrad_{\Gamma}(r\bfn)
,
\bgrad_{\Gamma}(r'\bfn)
\big\rangle
\bfn.
\]
Combining $\bfn\times\bfE_0=0$ with \eqref{eq:Enrep} yields
\[
\bfE_0=\langle\bfn,\bfE_0\rangle\bfn = -\kappa^{-2}\div_{\Gamma}(\bfga_{N}^{+}\bfE_0)\bfn
\]
and thus
\[
\delta^2\bfn[r,r']\times\bfE_0
=
-\kappa^{-2}\div_{\Gamma}(\bfga_{N}^{+}\bfE_0)
\big[\bgrad_{\Gamma}\bfn\big]
\Big(
r\big[\bgrad_{\Gamma}r'\big]
+
r'\big[\bgrad_{\Gamma}r\big]
\Big)\times\bfn.
\]
\item
We next deal with
\[
\delta\bfn[r]\times\delta\bfE^s[r']
+
\delta\bfn[r]\times\frac{\partial\bfE_0}{\partial (r'\bfn)}.
\]
Therefore, we remark that
\begin{align}\label{eq:dnVxdEV}
\delta\bfn[r]\times\delta\bfE^s[r']
=
\delta\bfn[r]\times\bfga_0^+\big(\delta\bfE^s[r']\big)
+\delta\bfn[r]\times\bfn\langle\delta\bfE^s[r'],\bfn\rangle
\end{align}
and
\begin{align}\label{eq:dndEdV}
\delta\bfn[r]\times\frac{\partial\bfE_0}{\partial (r\bfn')}
=
r'\delta\bfn[r]\times\frac{\partial\bfE_0}{\partial\bfn}
=
r'\delta\bfn[r]\times\bfga_0^+\bigg(\frac{\partial\bfE_0}{\partial \bfn}\bigg)
+
r'\delta\bfn[r]\times\bfn\bigg\langle\frac{\partial\bfE_0}{\partial\bfn},\bfn\bigg\rangle.
\end{align}
In both equations, the normal component is given by the first term of the right-hand side.
Thus, the sum of \eqref{eq:dnVxdEV} and \eqref{eq:dndEdV} has a vanishing normal component
due to \eqref{eq:normalcancels} and is given by
\[
\delta\bfn[r]\times\delta\bfE^s[r']
+
\delta\bfn[r]\times\frac{\partial\bfE_0}{\partial (r'\bfn)}
=
\delta\bfn[r]\times\bfn\langle\delta\bfE^s[r'],\bfn\rangle
+
r'\delta\bfn[r]\times\bfn\bigg\langle\frac{\partial\bfE_0}{\partial \bfn},\bfn\bigg\rangle.
\]
Due to
\[
\div\bfu = \div_{\Gamma}\bfu + 2H\langle\bfu,\bfn\rangle+\bigg\langle\frac{\partial\bfu}{\partial\bfn},\bfn\bigg\rangle,
\]
see \cite[Eq.~(2.5.210)]{Ned01}, it holds
\[
\bfn\bigg\langle\frac{\partial\bfE_0}{\partial \bfn},\bfn\bigg\rangle
=
\bfn\frac{\partial}{\partial\bfn}\langle\bfE_0,\bfn\rangle
=
(\underbrace{\div\bfE_0}_{=0})\bfn-\div_{\Gamma}\big(\underbrace{\bfga_0^+\bfE_0}_{=0}\big)\bfn+2H\langle\bfE_0,\bfn\rangle\bfn
=
2H\langle\bfE_0,\bfn\rangle\bfn,
\]
which together with \eqref{eq:Enrep} yields
\begin{align}\label{eq:intermres}
\delta\bfn[r]\times\delta\bfE^s[r']
+
\delta\bfn[r]\times\frac{\partial\bfE_0}{\partial (r'\bfn)}
=
\delta\bfn[r]\times\bfn\langle\delta\bfE^s[r'],\bfn\rangle
-
2H\kappa^{-2}\div_{\Gamma}(\bfga_{N}^{+}\bfE_0)r'\delta\bfn[r]\times\bfn.
\end{align}
\item
We proceed with
\begin{align*}
\bfn\times\frac{\partial(\delta\bfE^s[r'])}{\partial (r\bfn)}
+
\delta\bfn[r]\times\bfn\langle\delta\bfE^s[r'],\bfn\rangle,
\end{align*}
where the first term occurs in \eqref{eq:secondgeneral} and the second in \eqref{eq:intermres}. Due to
\begin{align}\label{eq:curlformula}
\bcurl\bfu = (\curl_\Gamma\bfu)\bfn+\bcurl_\Gamma(u|_\Gamma\cdot\bfn)-([\bgrad_\Gamma\bfn]\bfu)\times\bfn+\bfn\times\frac{\partial\bfu}{\partial\bfn},
\end{align}
see, e.g. \cite[Eq.~(2.5.211)]{Ned01}, the first term can be rewritten as
\[
\bfn\times\frac{\partial(\delta\bfE^s[r'])}{\partial (r\bfn)}
=
r\bfga_N^+\big(\delta\bfE^s[r']\big)\times\bfn
-
r\bcurl_{\Gamma}\big(\langle\delta\bfE^s[r'],\bfn\rangle\big)
+
r\big([\bgrad_{\Gamma}\bfn]\delta\bfE^s[r']\big)\times\bfn,
\]
where the last term can be reformulated to
\[
r\big([\bgrad_{\Gamma}\bfn]\delta\bfE^s[r']\big)\times\bfn
=
r\big(2H\bfI-[\bgrad_{\Gamma}\bfn]\big)(\delta\bfE^s[r']\times\bfn),
\]
due to the formula $(\bfA\bfb)\times\bfc+\bfb\times(\bfA\bfc)=(\tr(\bfA)\bfI-\bfA^{\intercal})(\bfb\times\bfc)$, $[\bgrad_{\Gamma}\bfn]=[\bgrad_{\Gamma}\bfn]^{\intercal}$, and $\tr[\bgrad_{\Gamma}\bfn]=2H$.

Moreover, since $\delta\bfn[r]=-[\bgrad_{\Gamma}(r\bfn)]\bfn$ due to lemma~\ref{lem:nder} and
\[
-[\bgrad_{\Gamma}(r\bfn)]\bfn\times\bfn - \underbrace{\big[\bgrad_{\Gamma}\bfn\big]r\bfn}_{=0}\times\bfn
=
-\bcurl_{\Gamma}\big(\langle r\bfn,\bfn\rangle\big)
=
-\bcurl_{\Gamma}(r),
\]
it holds
\[
\delta\bfn[r]\times\bfn\langle\delta\bfE^s[r'],\bfn\rangle = -\bcurl_{\Gamma}(r)\langle\delta\bfE^s[r'],\bfn\rangle.
\]
This yields
\begin{align*}
&\bfn\times\frac{\partial(\delta\bfE^s[r'])}{\partial (r\bfn)}
+
\delta\bfn[r]\times\bfn\langle\delta\bfE^s[r'],\bfn\rangle\\
&\qquad\qquad =r\bfga_N^+\big(\delta\bfE^s[r']\big)\times\bfn
-\bcurl_{\Gamma}\big(r\langle\delta\bfE^s[r'],\bfn\rangle\big)
+
r\big(2H\bfI-[\bgrad_{\Gamma}\bfn]\big)(\delta\bfE^s[r']\times\bfn).
\end{align*}
\item
Lastly, we have to deal with
\begin{align}\label{eq:tangentialsecond}
\bfn\times\frac{\partial^2\bfE_0}{\partial(r'\bfn)\partial(r\bfn)}
=
rr'\bfn\times\frac{\partial^2\bfE_0}{\partial\bfn^2}.
\end{align}
Therefore, we recall the formula
\begin{align*}
\Delta \psi
= \Delta _{\Gamma}\psi
+ 2H\frac{\partial\psi}{\partial\mathbf{n}}
+ \frac{\partial ^2\psi}{\partial\mathbf{n}^2}\quad\text{on}~\Gamma,
\end{align*}
cf.~\cite[Eq.~(2.3.212)]{Ned01}. Applied to each component of the $\bfE_0$ this yields
\begin{align}\label{eq:secondnormal}
\frac{\partial^2\bfE_0}{\partial\bfn^2}
=
\bDelta\bfE _0-\bDelta_{\Gamma}\bfE_0-2H\frac{\partial\bfE_0}{\partial\bfn},
\end{align}
with the compontent-wise application of the Laplace operator $\bDelta$ and the Laplace-Beltrami operator
$\bDelta_{\Gamma}$. The first term on the right-hand side of \eqref{eq:secondnormal} can be reformulated as
\[
\bDelta\bfE_0=\bcurl\bcurl\bfE_0+\bgrad\underbrace{\div\bfE_0}_{=0}=\bcurl\bcurl\bfE_0=\kappa^2\bfE_0.
\]
For the second term, we invoke the formula
\[
\bDelta_{\Gamma}\bfn = \Big(\big(2H)^2-2K\Big)\bfn+2\bgrad_{\Gamma}H,
\]
which can be derived from the Weingarten and the Codazzi equations, see
\cite[Chapters 13.3 and 19.3]{Gra06}.
Here, $K=\det\big[\bgrad_{\Gamma}\bfn\big]$ denotes the Gaussian mean curvature. This yields
\begin{align*}
\bDelta_{\Gamma}\bfE_0
={}&
\bDelta_{\Gamma}\big(\langle\bfE_0,\bfn\rangle\big)\bfE_0\\
={}&
(\bDelta_{\Gamma}\langle\bfE_0,\bfn\rangle\big)\bfn
+
2\big[\bgrad_{\Gamma}\bfn\big]\bgrad_{\Gamma}\langle\bfE_0,\bfn\rangle
+
\langle\bfE_0,\bfn\rangle\bDelta_{\Gamma}\bfn\\
={}&
(\bDelta_{\Gamma}\langle\bfE_0,\bfn\rangle\big)\bfn
-
2\kappa^{-2}\big[\bgrad_{\Gamma}\bfn\big]\bgrad_{\Gamma}\div_{\Gamma}\big(\bfga_{N}^+\bfE_0)\\
&\qquad\qquad+
\Big(\big(2H)^2-2K\Big)\langle\bfE_0,\bfn\rangle\bfn-2\kappa^{-2}\div_{\Gamma}\big(\bfga_{N}^+\bfE_0)\bgrad_\Gamma H.
\end{align*}
Since $\bfn\times\bfE_0$ vanishes and since
\[
\bfn\times\frac{\partial\bfE_0}{\partial\bfn}
=
-\bfga_N^+\bfE_0\times\bfn+\bcurl_{\Gamma}(\langle\bfE_0,\bfn\rangle)
\]
due to \eqref{eq:curlformula}, \eqref{eq:tangentialsecond} can be rewritten as
\begin{align*}
\bfn\times\frac{\partial^2\bfE_0}{\partial(r'\bfn)\partial(r\bfn)}
={}&
-rr'\Big(
2\kappa^{-2}\bfn\times\big[\bgrad_{\Gamma}\bfn\big]\bgrad_{\Gamma}\div_{\Gamma}\big(\bfga_{N}^+\bfE_0)\\
&\qquad\qquad+
2\div_{\Gamma}\big(\bfga_{N}^+\bfE_0)\bfn\times\bgrad_\Gamma H
+2H\big(\bfga_N^+\bfE_0\times\bfn-\bcurl_{\Gamma}(\langle\bfE_0,\bfn\rangle)\big)\Big).
\end{align*}
\end{itemize}
Putting all expressions together yields the assertion.
\end{proof}
We note the immediate connection to the boundary values of the first local shape derivative which is useful for
the purposes of implementation.
\begin{corollary}
Let the assumptions of theorem~\ref{thm:main} hold.
Given an electric field $\bfE$, let
\[
\bfg_{\bfE}^{(\delta,s)}[\bfV]
=
-
\langle\bfV,\bfn\rangle\bfga_{N}^{+}\bfE\times\bfn
+
\bcurlg\big(\langle\bfV,\bfn\rangle\langle\bfE,\bfn\rangle\big)
\]
be defined similarly to the boundary values of the first local shape derivative from theorem~\ref{thm:firstlocal}. Then it holds
\begin{align*}
\bfg^{(\delta^2,s)}[r,r']
={}&
-
\big(2H\bfI-\big[\bgrad_{\Gamma}\bfn\big]\big)
\bigg(
\kappa^{-2}\div_{\Gamma}(\bfga_{N}^{+}\bfE_0)\Big(r\big[\bgrad_{\Gamma}r'\big]+
r'\big[\bgrad_{\Gamma}r\big]
\Big)\times\bfn\\
&\qquad\qquad\qquad\qquad\qquad\qquad\qquad\qquad\qquad\quad+\Big(
r\bfg_{\bfE_0}^{(\delta,s)}[r']+
r'\bfg_{\bfE_0}^{(\delta,s)}[r]
\Big)\bigg)\\
&\qquad
+\bfg_{\delta\bfE^s[r']}^{(\delta,s)}[r]+\bfg_{\delta\bfE^s[r]}^{(\delta,s)}[r']\\
&\qquad+
rr'\Big(
2\kappa^{-2}\bfn\times\big[\bgrad_{\Gamma}\bfn\big]\bgrad_{\Gamma}\div_{\Gamma}\big(\bfga_{N}^+\bfE_0)\\
&\qquad\qquad\qquad\qquad+
2\kappa^{-2}\div_{\Gamma}\big(\bfga_{N}^+\bfE_0)\bfn\times\bgrad_\Gamma H
-2H\bfg_{\bfE_0}^{(\delta,s)}[1]\Big).
\end{align*}
\end{corollary}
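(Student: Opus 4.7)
The strategy is to recognize the right-hand side of Theorem~\ref{thm:main} as an aggregation of building blocks of the form
\[
\bfg_\bfE^{(\delta,s)}[r] = -r\,\bfga_N^+\bfE\times\bfn + \bcurlg\big(r\langle\bfE,\bfn\rangle\big),
\]
and to perform a direct, block-by-block substitution. No new analytic machinery is required; the only input from outside Theorem~\ref{thm:main} is the characterization of the first local shape derivative in Theorem~\ref{thm:firstlocal}.

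Concretely, the second and third lines of the right-hand side of Theorem~\ref{thm:main}, namely
\[
-r\bfga_N^+\delta\bfE^s[r']\times\bfn-r'\bfga_N^+\delta\bfE^s[r]\times\bfn+\bcurlg\big(r\langle\delta\bfE^s[r'],\bfn\rangle+r'\langle\delta\bfE^s[r],\bfn\rangle\big),
\]
are exactly $\bfg_{\delta\bfE^s[r']}^{(\delta,s)}[r]+\bfg_{\delta\bfE^s[r]}^{(\delta,s)}[r']$ by the definition above, applied with $\bfE=\delta\bfE^s[r']$ and with $\bfE=\delta\bfE^s[r]$ in turn. For the $rr'$-block I recognize that $2H\big(\bfga_N^+\bfE_0\times\bfn-\bcurlg\langle\bfE_0,\bfn\rangle\big)$ coincides with $-2H\,\bfg_{\bfE_0}^{(\delta,s)}[1]$, since the definition evaluated at $\bfE=\bfE_0$ and $r=1$ produces exactly the negative of the bracketed quantity. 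Finally, inside the outer $(2H\bfI-[\bgrad_{\Gamma}\bfn])$-multiplier, the summand $r(\delta\bfE^s[r']\times\bfn)+r'(\delta\bfE^s[r]\times\bfn)$ is reexpressed by invoking Theorem~\ref{thm:firstlocal}: the boundary condition $\bfga_t^+\delta\bfE^s[r']=\bfg_{\bfE_0}^{(\delta,s)}[r']$, i.e.\ $\bfn\times\delta\bfE^s[r']=\bfg_{\bfE_0}^{(\delta,s)}[r']$ on $\Gamma$, turns $\delta\bfE^s[r']\times\bfn$ into $\bfg_{\bfE_0}^{(\delta,s)}[r']$ up to the sign coming from the anti-commutativity of the cross product. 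This produces the $r\bfg_{\bfE_0}^{(\delta,s)}[r']+r'\bfg_{\bfE_0}^{(\delta,s)}[r]$ piece appearing in the Corollary's first group.

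All substitutions above are mechanical; the main point that needs care is the sign bookkeeping when toggling between $\bfn\times\delta\bfE^s[r']$ and $\delta\bfE^s[r']\times\bfn$ inside the $(2H\bfI-[\bgrad_{\Gamma}\bfn])$-multiplier, since the same multiplier also acts on the $\kappa^{-2}\div_\Gamma(\bfga_N^+\bfE_0)$-summand with a fixed orientation. Once this sign convention is tracked consistently across all four blocks, every term on the right-hand side of Theorem~\ref{thm:main} is paired with its counterpart in the Corollary and the claimed identity follows by inspection.
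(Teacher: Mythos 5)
Your overall strategy --- a term-by-term substitution into the formula of theorem~\ref{thm:main} --- is the natural (and essentially only) one, and it matches what the paper implicitly does, since the corollary is stated there without proof. Two of your three identifications are exact and purely definitional: with $\bfV=r\bfn$ one has $\bfg_{\bfE}^{(\delta,s)}[r]=-r\,\bfga_N^+\bfE\times\bfn+\bcurlg(r\langle\bfE,\bfn\rangle)$, so the lines $-r\bfga_N^+\delta\bfE^s[r']\times\bfn-r'\bfga_N^+\delta\bfE^s[r]\times\bfn+\bcurlg\big(r\langle\delta\bfE^s[r'],\bfn\rangle+r'\langle\delta\bfE^s[r],\bfn\rangle\big)$ are literally $\bfg_{\delta\bfE^s[r']}^{(\delta,s)}[r]+\bfg_{\delta\bfE^s[r]}^{(\delta,s)}[r']$, and $-2H\bfg_{\bfE_0}^{(\delta,s)}[1]=2H\big(\bfga_N^+\bfE_0\times\bfn-\bcurlg\langle\bfE_0,\bfn\rangle\big)$ reproduces the last summand of the $rr'$-block. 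These parts of your argument are fine.

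The gap sits in the third block, precisely at the point you defer with ``up to the sign coming from the anti-commutativity of the cross product.'' The paper's convention is $\bfga_t^+\bfu=\bfn\times\bfu$, and the boundary condition in \eqref{eq:firstlocaldu} together with theorem~\ref{thm:firstlocal} gives $\bfn\times\delta\bfE^s[r']=\bfg_{\bfE_0}^{(\delta,s)}[r']$; hence $\delta\bfE^s[r']\times\bfn=-\bfg_{\bfE_0}^{(\delta,s)}[r']$, so the summand $r(\delta\bfE^s[r']\times\bfn)+r'(\delta\bfE^s[r]\times\bfn)$ appearing inside the $(2H\bfI-[\bgrad_{\Gamma}\bfn])$-multiplier in theorem~\ref{thm:main} equals $-\big(r\bfg_{\bfE_0}^{(\delta,s)}[r']+r'\bfg_{\bfE_0}^{(\delta,s)}[r]\big)$, the \emph{opposite} sign of what the corollary asserts. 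Tracking the sign ``consistently'' therefore does not make the identity follow by inspection; it produces a mismatch, which means either theorem~\ref{thm:main} or the corollary carries a sign error in that one summand, and your proof must detect and resolve this rather than assert it away. (A similar unexamined discrepancy is the theorem's $2\kappa^{2}\bfn\times[\bgrad_{\Gamma}\bfn]\bgrad_{\Gamma}\div_{\Gamma}(\bfga_N^+\bfE_0)$ versus the corollary's $2\kappa^{-2}$, where the theorem's own proof supports $\kappa^{-2}$; a purely ``mechanical'' substitution from the theorem's statement would not yield the corollary's coefficient.) To close the argument you need to carry out the sign computation explicitly and either correct the offending summand or justify why $\delta\bfE^s[r']\times\bfn=+\bfg_{\bfE_0}^{(\delta,s)}[r']$ under the stated trace conventions.
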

\begin{corollary}
Under the assumptions of the previous corollary it holds
\begin{align*}
\bfg^{(\delta^2,s)}[r,r]
={}&
-2\big(2H\bfI-\big[\bgrad_{\Gamma}\bfn\big]\big)
\Big(
\kappa^{-2}\div_{\Gamma}(\bfga_{N}^{+}\bfE_0)r\big[\bgrad_{\Gamma}r\big]
\times\bfn+
r\bfg_{\bfE_0}^{(\delta,s)}[r]
\Big)\\
&\qquad+2\bfg_{\delta\bfE^s[r]}^{(\delta,s)}[r]\\
&\qquad+
r^2\Big(
2\kappa^{-2}\bfn\times\big[\bgrad_{\Gamma}\bfn\big]\bgrad_{\Gamma}\div_{\Gamma}\big(\bfga_{N}^+\bfE_0)\\
&\qquad\qquad\qquad\qquad+
2\kappa^{-2}\div_{\Gamma}\big(\bfga_{N}^+\bfE_0)\bfn\times\bgrad_\Gamma H
-2H\bfg_{\bfE_0}^{(\delta,s)}[1]\Big).
\end{align*}
\end{corollary}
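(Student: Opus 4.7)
The plan is to obtain this corollary as a direct specialization of the previous one by substituting $r' = r$ into the formula for $\bfg^{(\delta^2,s)}[r,r']$ and collecting like terms. Since the second Fréchet derivative $\delta^2\bfE^s[\bfV,\bfV']$ is bilinear and symmetric in its two arguments, evaluating the symmetric expression at $r'=r$ is legitimate and the resulting boundary data must agree with $\bfg^{(\delta^2,s)}[r,r]$.

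Concretely, I would go line by line through the statement of the previous corollary and perform the substitution. The first grouped term contains the symmetric combination $r[\bgrad_\Gamma r']+r'[\bgrad_\Gamma r]$, which collapses to $2r[\bgrad_\Gamma r]$, and similarly $r\bfg^{(\delta,s)}_{\bfE_0}[r']+r'\bfg^{(\delta,s)}_{\bfE_0}[r]$ collapses to $2r\bfg^{(\delta,s)}_{\bfE_0}[r]$; factoring the overall $-\big(2H\bfI-[\bgrad_\Gamma\bfn]\big)$ then produces the prefactor $-2\big(2H\bfI-[\bgrad_\Gamma\bfn]\big)$ in the claimed expression. The middle line $\bfg^{(\delta,s)}_{\delta\bfE^s[r']}[r]+\bfg^{(\delta,s)}_{\delta\bfE^s[r]}[r']$ collapses to $2\bfg^{(\delta,s)}_{\delta\bfE^s[r]}[r]$. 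Finally, the terms multiplied by $rr'$ become multiplied by $r^2$, and within those the expression $2H\big(\bfga_N^+\bfE_0\times\bfn-\bcurl_\Gamma(\langle\bfE_0,\bfn\rangle)\big)$ is exactly $2H\,\bfg^{(\delta,s)}_{\bfE_0}[1]$ (applying Theorem~\ref{thm:firstlocal} to the constant field $1$, which gives $\langle\bfV,\bfn\rangle = 1$), and this already appears in the previous corollary in the same form, so no further manipulation is needed there.

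There is essentially no obstacle: the result is a tautological consequence of the symmetry of the expression in $r$ and $r'$, and the only care required is in checking the constants (each symmetric pair becomes twice one copy, hence the factors of $2$ in the first two grouped terms). No new shape-calculus identity is invoked beyond what was already used to establish the preceding corollary, and no cancellation between unlike terms occurs. A one-line proof of the form ``Set $r'=r$ in the previous corollary and collect terms'' suffices.
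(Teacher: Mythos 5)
Your proposal is correct and is exactly the argument the paper intends: the corollary is stated without proof precisely because it is the previous corollary evaluated at $r'=r$, with each symmetric pair collapsing to twice a single copy and $rr'$ becoming $r^2$. One harmless slip in your aside: by the definition of $\bfg_{\bfE_0}^{(\delta,s)}$ one has $\bfga_N^+\bfE_0\times\bfn-\bcurl_{\Gamma}(\langle\bfE_0,\bfn\rangle)=-\bfg_{\bfE_0}^{(\delta,s)}[1]$ (note the minus sign), but since that term is independent of $r'$ and is copied verbatim from the previous corollary, this does not affect your argument.
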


\section{Mean of the Scattered Field on Random Domains}
\label{sec:BIE}

\subsection{Second Derivative of the Mean}
For the following considerations, we shall assume that the random perturbation
fields perturb the geometry in normal direction only, i.e., we assume
$\bfV(\omega)=r(\omega)\bfn$ and $\bfV'(\omega)=r(\omega)\bfn'$. Moreover, without loss of generality, 
we assume that the boundary perturbations are centered, i.e.,
\[
\E[r]=0,\quad\text{and thus formally}\quad \E[D_{\varepsilon}^c] = D_0^c.
\]
This is not a restriction, since one can easily recenter the random 
field by considering $\tilde{r}(\omega)=r(\omega)-\mathbb{E}[r]$.
The assumption $\E[r]=0$ especially means that the mean of the first order local shape 
derivative \eqref{eq:firstlocaldu} vanishes due to
\[
\E\big[\bfg^{(\delta,s)}[\bfV]\big]
=
-
\underbrace{\E[r]}_{=0}\bfga_{N}^{+}\bfE_0\times\bfn
+
\bcurlg\big(\underbrace{\E[r]}_{=0}\langle\bfE_0,\bfn\rangle\big)=\bfNull.
\]
Using \eqref{eq:correctionterm} and taking the mean of \eqref{eq:shapeTaylor}
this implies
\begin{align}\label{eq:mean}
\E[\bfE_{\varepsilon}^s](\mathbf{x}) = \bfE_0^s(\mathbf{x}) + \frac{\varepsilon^2}{2}
\E\big[\delta^2\bfE^s[r,r]\big](\mathbf{x})+\mathcal{O}(\varepsilon ^3),
\quad\mathbf{x}\in G,
\end{align}
for $0<\varepsilon\leq\varepsilon_0$.

The asymptotic expansion for $\mathbb{E}[\bfE_{\varepsilon}^s]$ can, under certain circumstances, be improved to fourth
order accuracy with help of the following lemma, which is inspired by
\cite[Lemma 2.3]{HP18} and \cite[Chapter XI]{Loe78}.

\begin{lemma}
Assume that the boundary perturbations in normal direction are given by an
expansion
\[
r(\omega)
=
\sum _{i=1}^M\kappa _i(\mathbf{x})X_i(\omega),
\]
where $X_i$, $i=1,\ldots,M$, are independent and identically distributed
random variables. Then, it holds
\[
\delta^3 \bfE^s[
r(\omega),
r(\omega),
r(\omega)
]
=
\sum _{i,j,k=1}^M\delta^3 \bfE^s[\kappa _i,\kappa _j,
\kappa _k]
X_i(\omega)X_j(\omega)X_k(\omega),
\]
provided that the \emph{third order local shape derivative} $\delta^3\bfE$,
as usual given as the local shape derivative of the second order local shape derivative, exists.
\end{lemma}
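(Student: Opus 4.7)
The plan is to reduce the claim to the multilinearity of the third Fréchet derivative. By assumption $\delta^3\bfE^s$ exists as the local shape derivative of the second order local shape derivative, so it is the third Fréchet derivative of the map $\bfh\mapsto\bfE^s[\bfh]$ evaluated at $\bfh=\bfNull$, restricted to normal perturbations. A standard fact about higher order Fréchet derivatives is that the $k$-th derivative at a fixed point is a (symmetric) bounded $k$-linear map on its arguments. Consequently $\delta^3\bfE^s[\,\cdot\,,\,\cdot\,,\,\cdot\,]$ is a trilinear form in its three vector-field arguments.

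The next step is to exploit separability of $r(\omega)$. For each fixed $\omega\in\Omega$, the realization
\[
r(\omega)\bfn
=
\sum_{i=1}^M X_i(\omega)\,\kappa_i\bfn
\]
is a deterministic element of $C^{3,1}(\Gamma;\R^3)$, written as a finite linear combination of the fixed vector fields $\kappa_i\bfn$ with scalar coefficients $X_i(\omega)\in\R$. Inserting this into each of the three slots of the trilinear form and pulling the scalars $X_i(\omega), X_j(\omega), X_k(\omega)$ out gives
\[
\delta^3\bfE^s\big[r(\omega)\bfn,r(\omega)\bfn,r(\omega)\bfn\big]
=
\sum_{i,j,k=1}^M X_i(\omega)X_j(\omega)X_k(\omega)\,\delta^3\bfE^s[\kappa_i,\kappa_j,\kappa_k],
\]
which, under the shorthand notation $\delta^3\bfE^s[\kappa_i,\kappa_j,\kappa_k]\isdef\delta^3\bfE^s[\kappa_i\bfn,\kappa_j\bfn,\kappa_k\bfn]$ employed throughout the previous results, is the desired identity.

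The only potential obstacle is justifying that the finiteness of the sum, rather than a general series, is what makes this immediate: trilinearity gives the identity directly without any passage to a limit, so no continuity/interchange-of-limits argument is required. If later an infinite Karhunen–Loève-type expansion were considered, one would additionally have to invoke continuity of $\delta^3\bfE^s$ in the $C^{3,1}(\Gamma;\R^3)^3$-topology to pass the series inside the trilinear form; but under the present finite-rank assumption this issue does not arise, and the assertion follows purely from the algebraic trilinearity of the third Fréchet derivative.
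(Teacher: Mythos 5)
Your proposal is correct and follows the same route as the paper, whose entire proof is the single observation that the assertion follows from the trilinearity of $\delta^3\bfE^s$; you merely spell out the finite-linear-combination step and the pulling-out of the scalars $X_i(\omega)$. The remark that finiteness of the expansion obviates any continuity argument is a sensible (and accurate) elaboration, not a deviation.
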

\begin{proof}
The assertion follows directly by exploiting the trilinearity of $\delta^3\bfE$.
\end{proof}
Obviously, due to the independence of the random 
variables $(X_i)_{i=1}^M$, it holds
\[
\mathbb{E}[\delta ^3\bfE^s]=\sum_{i=1}^M\delta^3 
\bfE^s[\kappa _i\cdot{\bf n},\kappa _i\cdot{\bf n},\kappa _i\cdot{\bf n}]
\mathbb{E}[X_i^3]=0,
\]
if the probability distribution of the $X_i$ is symmetric around zero.
The expansion of the 
mean $\mathbb{E}[\bfE_{\varepsilon}^s]$ for \eqref{eq:mean} is thus fourth
order accurate if we assume that the third order local shape 
derivative is Lipschitz-continuous.

The following theorem is an immediate consequence of taking the mean and interchanging
integration and differentiation.
\begin{theorem}
	Under the same assumptions as in theorem~\ref{thm:main} it holds
	\begin{align}\label{eq:meansecondlocaldu}
	\begin{aligned}
	\big(\bcurl\bcurl-\kappa^2\big)\E\big[\delta^2\bfE^s[r,r]\big] ={}& \bfNull&&\text{in}~D_0^c,\\
	\E\big[\delta^2\bfE^s[r,r]\big] ={}&\E\big[\bfg^{(\delta^2,s)}[r,r]\big]
	&&\text{on}~\Gamma,
	\end{aligned}
	\end{align}
	with
	\begin{align}\label{eq:meansecondlocaldug}
	\begin{aligned}
	\E\big[\bfg^{(\delta^2,s)}[r,r]\big]
	={}&
	-2\big(2H\bfI-\big[\bgrad_{\Gamma}\bfn\big]\big)
	\Big(
	\kappa^{-2}\div_{\Gamma}(\bfga_{N}^{+}\bfE_0)\E\Big[r\big[\bgrad_{\Gamma}r\big]\Big]
	\times\bfn+
	\E\big[r\bfg_{\bfE_0}^{(\delta,s)}[r]\big]
	\Big)\\
	&\qquad+2\E\big[\bfg_{\delta\bfE^s[r]}^{(\delta,s)}[r]\big]\\
	&\qquad+
	\E[r^2]\Big(
	2\kappa^{-2}\bfn\times\big[\bgrad_{\Gamma}\bfn\big]\bgrad_{\Gamma}\div_{\Gamma}\big(\bfga_{N}^+\bfE_0)\\
	&\qquad\qquad\qquad\qquad+
	2\kappa^{-2}\div_{\Gamma}\big(\bfga_{N}^+\bfE_0)\bfn\times\bgrad_\Gamma H
	-2H\bfg_{\bfE_0}^{(\delta,s)}[1]\Big).
	\end{aligned}
	\end{align}
\end{theorem}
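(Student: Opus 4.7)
The plan is to apply $\E[\cdot]$ to the second-order boundary value problem \eqref{eq:secondlocaldu} for $\delta^2\bfE^s[r,r]$, whose Dirichlet data were computed in the corollary preceding this theorem, and to commute $\E$ with all linear operators involved. The interior equation $(\bcurl\bcurl-\kappa^2)$, the tangential trace $\bfga_t^+$, and the Silver-M\"uller radiation condition are deterministic and linear in the solution; once Bochner integrability of $\omega\mapsto\delta^2\bfE^s[r(\omega),r(\omega)]$ is in hand, they pass through $\E$ and \eqref{eq:meansecondlocaldu} is read off directly.

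First I would verify integrability. By theorem~\ref{thm:potthastthm} the solution map $\bfV\mapsto\bfE^s[\bfV]$ is twice Fr\'echet differentiable on $\bfB^{2,1}_\varepsilon(\Gamma)$, so $\delta^2\bfE^s[\cdot,\cdot]$ is a bounded bilinear map from $C^{2,1}(\Gamma;\R^3)^2$ into $\bfH_\loc(\bcurl,D_0^c)$, with tangential trace in $\chispace$. The standing hypothesis $\|r(\omega)\|_{C^{3,1}(\Gamma;\R^3)}\lesssim 1$ uniformly in $\omega$ therefore yields an essentially bounded random variable on $(\Omega,\Sigma,\P)$, which is Bochner integrable. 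Since $\bcurl\bcurl-\kappa^2$ is closed on the corresponding Bochner--Sobolev space and $\bfga_t^+$ is continuous, both commute with the Bochner integral, giving
\begin{align*}
(\bcurl\bcurl-\kappa^2)\E\big[\delta^2\bfE^s[r,r]\big]
&= \E\big[(\bcurl\bcurl-\kappa^2)\delta^2\bfE^s[r,r]\big]=\bfNull\quad\text{in}~D_0^c,\\
\bfga_t^+\E\big[\delta^2\bfE^s[r,r]\big]
&= \E\big[\bfg^{(\delta^2,s)}[r,r]\big]\quad\text{on}~\Gamma.
\end{align*}
The radiation condition transfers to the mean by dominated convergence applied to the integrands of the associated integral representation on $G$, using again the uniform bound on $r(\omega)$.

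Next I would obtain \eqref{eq:meansecondlocaldug} by taking the expectation of the formula in the corollary preceding this theorem term by term. Every summand is either a deterministic coefficient times $r^2$, which yields the factor $\E[r^2]$, or a bilinear expression in $r$ of the forms $r\big[\bgrad_{\Gamma}r\big]$, $r\,\bfg_{\bfE_0}^{(\delta,s)}[r]$, and $\bfg^{(\delta,s)}_{\delta\bfE^s[r]}[r]$. The last of these is genuinely bilinear in $r$ because $r\mapsto\delta\bfE^s[r]$ is a linear map (its boundary data in theorem~\ref{thm:firstlocal} depend linearly on $r$, and the exterior Maxwell problem is linear), so $\E$ may again be pulled across the inner linear map. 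All deterministic surface operators $(2H\bfI-[\bgrad_{\Gamma}\bfn])$, $\bfn\times\cdot$, $\bcurl_{\Gamma}$, $\div_{\Gamma}$, and so on, that sandwich these random quantities are $\omega$-independent and hence commute with $\E$ as well.

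The main obstacle, and really the only one, is the measurability and Bochner-integrability bookkeeping that underlies the commutations above; once this is secured from the uniform $C^{3,1}$-bound on $r(\omega)$ together with the boundedness of the bilinear operator $\delta^2\bfE^s[\cdot,\cdot]$, the algebraic structure of the claim is an immediate consequence of theorem~\ref{cor:secder} and the reformulation given in its corollaries.
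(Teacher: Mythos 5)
Your proposal is correct and follows essentially the same route as the paper, which simply remarks that the theorem is an immediate consequence of taking the mean and interchanging integration and differentiation in \eqref{eq:secondlocaldu} and in the formula for $\bfg^{(\delta^2,s)}[r,r]$ from the preceding corollary. Your additional bookkeeping on Bochner integrability (via the uniform $C^{3,1}$-bound on the perturbations and the boundedness of the bilinear map $\delta^2\bfE^s[\cdot,\cdot]$) and the observation that $r\mapsto\delta\bfE^s[r]$ is linear, so that $\bfg^{(\delta,s)}_{\delta\bfE^s[r]}[r]$ is genuinely bilinear in $r$, are exactly the details the paper leaves implicit.
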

The boundary values for the second order correction term  \eqref{eq:meansecondlocaldug} require the computation
of the entities
\[
\E\Big[r\big[\bgrad_{\Gamma}r\big]\Big],\qquad
\E\Big[r\bfg_{\bfE_0}^{(\delta,s)}[r]\Big],\qquad
\E\Big[\bfg_{\delta\bfE^s[r]}^{(\delta,s)}[r]\Big],\qquad
\E[r^2],
\]
which depend non-linearly on the random perturbations $r$. As we will see,
we can compute these quantities deterministically, if we reformulate them
as the diagonal of some correlations living in the higher-dimensional product
domain $\Gamma\times\Gamma$. More precisely, it holds
\begin{align}\label{eq:diagonals}
\begin{aligned}
\E\Big[r\big[\bgrad_{\Gamma}r\big]\Big]
={}&
\diag\big((\Id\otimes\bgrad_{\Gamma})\Cor[r]\big),&
\E\Big[\bfg_{\delta\bfE^s[r]}^{(\delta,s)}[r]\Big]
={}&
\diag(\bfA),
\\
\E[r^2]
={}&\diag\big(\Cor[r]\big),&
\E\Big[r\bfg_{\bfE_0}^{(\delta,s)}[r]\Big]
={}&\diag(\bfB),
\end{aligned}
\end{align}
with
\begin{align}\label{eq::AB}
\begin{aligned}
\bfA ={}& 
-
\big(\Id\otimes(\cdot\times\bfn)\big)
\Cor\big[r,\bfga_{N}^{+}\delta\bfE^s[r]\big]
+
\kappa^{-2}(\Id\otimes\bcurlg\div_{\Gamma})
\Cor\big[r,\bfga_{N}^{+}\delta\bfE^s[r]\big],\\
\bfB ={}& -
\big(\Id\otimes(\bfga_{N}^{+}\bfE_0\times\bfn)\big)
\Cor[r]
+
\kappa^{-2}(\Id\otimes\bcurlg)\Big(\big(\Id\otimes(\div_{\Gamma}\bfga_{N}^{+}\bfE_0)\big)
\Cor[r]\Big).
\end{aligned}
\end{align}
The crucial observation which we have to make is that the entities $\bfga_{N}^{+}\bfE_0$ and
$\bfga_{N}^{+}\delta\bfE^s[r]$ can be obtained from the Neumann data of the electric wave
equation with appropriate boundary conditions. Numerically, this can be achieved for example
with finite element or boundary element methods. We shall opt for boundary element methods,
since the corresponding boundary integral equations allow for a more natural treatment of
the arising tensor product equations.

\subsection{Boundary Integral Equations}
We shortly recapitulate the theory of boundary integral equations for the solution of electromagnetic
scattering problems. A comprehensive review covering their theory and numerical discretization
may also be found in \cite{BH03}, whose presentation we will follow closely.

Based on the potentials
\begin{align*}
\Psi_V^\kappa(\phi)(\mathbf{x})
={}&
\int_{\Gamma}\frac{e^{i\kappa\|\mathbf{x}-\mathbf{y}\|}}{4\pi\|\mathbf{x}-\mathbf{y}\|}\phi(\mathbf{y})\d\sigma _\mathbf{y},&&\mathbf{x}\in D_0\cup D_0^c,\\
\boldsymbol\Psi_\mathbf{V}^\kappa(\boldsymbol\mu)(\mathbf{x})
={}&
\int_{\Gamma}\frac{e^{i\kappa\|\mathbf{x}-\mathbf{y}\|}}{4\pi\|\mathbf{x}-\mathbf{y}\|}\boldsymbol\mu(\mathbf{y})\d\sigma _\mathbf{y},&&\mathbf{x}\in D_0\cup D_0^c,
\end{align*}
the \emph{Maxwell single layer potential}
$\boldsymbol\Psi_{SL}^\kappa$ is defined as
\[
\boldsymbol\Psi_{SL}^\kappa(\boldsymbol\mu)
=
\boldsymbol\Psi_{\mathbf{V}}^{\kappa}(\boldsymbol\mu)
+
\frac{1}{\kappa^2}\bgrad\Psi_{V}^{\kappa}(\div_{\Gamma}\boldsymbol\mu)
\qquad
\text{in}~D_0\cup D_0^c,
\]
and the \emph{Maxwell double layer potential} $\boldsymbol\Psi_{DL}^\kappa$ as
\[
\boldsymbol\Psi_{DL}^\kappa(\boldsymbol\mu)
=
\bcurl\boldsymbol\Psi_{\mathbf{V}}^{\kappa}(\boldsymbol\mu)
\qquad
\text{in}~D_0\cup D_0^c.
\]
We note that both operators are continuous with mapping properties
\[
\boldsymbol\Psi_{SL}^\kappa,\boldsymbol\Psi_{DL}^\kappa
\colon
\chispace
\to
\bfH_\loc(\bcurl\bcurl,D_0\cup D_0^c)\cap\bfH_\loc(\div 0,D_0\cup D_0^c),
\]
satisfy
\[
(\bcurl\bcurl-\kappa^2)\boldsymbol\Psi_{SL}^\kappa(\boldsymbol\mu)=0,\qquad
(\bcurl\bcurl-\kappa^2)\boldsymbol\Psi_{DL}^\kappa(\boldsymbol\mu)=0,
\]
and the Silver-M\"uller radiation conditions for all $\boldsymbol{\mu}\in\chispace$.
Using the average operator $\{\gamma\}_{\Gamma}\isdef\frac{1}{2}(\gamma^++\gamma^-)$, we define
boundary integral operators
\[
\bfcS_{\kappa}=\{\bfga_t\}_{\Gamma}\circ\boldsymbol{\Psi}_{SL}^{\kappa},\qquad
\bfcC_{\kappa}=\{\bfga_t\}_{\Gamma}\circ\boldsymbol{\Psi}_{DL}^{\kappa},
\]
which are continuous with mapping properties
\[
\bfcS_{\kappa},\bfcC_{\kappa}\colon\chispace\to\chispace.
\]
We also note that $\bfcS_{\kappa}$ is an isomorphism on $\chispace$ if $\kappa$ is not a non-resonant wavenumber.

Having the basic definitions at hand, we can recall the following two identities
which will be important for the computation of the required correlations in
\eqref{eq::AB}. First, we note that for given boundary values $\bfg\in\chispace$ it holds
\begin{align}\label{eq:EFIE}
\bfcS_{\kappa}\big[\bfga_N\bfE_0^s\big]_{\Gamma}=\bfg\qquad\text{on}~\Gamma,
\end{align}
where we denote by $[\gamma]_{\Gamma}=\gamma^+-\gamma^-$ the jump across $\Gamma$.
For a perfect electric conductor $D_0$, this jump coincides with the Neumann trace of the total electric field, i.e.,
\[
\big[\bfga_N\bfE_0^s\big]_{\Gamma}
=
\bfga_N^+\bfE_0.
\]
Second, given boundary values $\bfg\in\chispace$, we can obtain the Neumann data
$\bfga_N^+\bfE_0^s$ of the solution to \eqref{eq:refmaxwell} by
\begin{align}\label{eq:D2N}
\bfcS_{\kappa}\big(\bfga_N^+\bfE_0^s\big)=\bigg(\frac{1}{2}+\bfcC_{\kappa}\bigg)\bfg\qquad\text{on}~\Gamma.
\end{align}

Depending on which boundary data are available, the scattered electric field in $D_0^c$ may be
expressed as
\[
\bfE_0^s=\boldsymbol\Psi_{SL}^\kappa\Big(\big[\bfga_N\bfE_0^s\big]_{\Gamma}\Big)\qquad\text{in}~D_0^c,
\]
or
\[
\bfE_0^s=-\boldsymbol\Psi_{SL}^\kappa\big(\bfga_N^+\bfE_0^s\big)-\boldsymbol\Psi_{DL}^\kappa\big(\bfga_{t}^+\bfE_0^s\big)\qquad\text{in}~D_0^c,
\]
where the latter is also referred to as Stratton-Chu representation formula.

\subsection{Correlation Calculus}
With the machinery of boundary integral equations at hand, the Neumann data of the total
electric field are given as the solution of
\begin{align}\label{eq:TPeq}
\bfcS_{\kappa}\big(\bfga_{N}^+\bfE_0\big)=-\bfga_{N}^+\bfE^i\qquad\text{on}~\Gamma,
\end{align}
due to \eqref{eq:EFIE}. This allows for the computation of $\bfB$ in \eqref{eq::AB}.
To compute the correlation $\bfA$ from \eqref{eq::AB} we rewrite it as
\begin{align*}
\bfA =
-
\big(\Id\otimes(\cdot\times\bfn)\big)
\tilde{\bfA}
+
\kappa^{-2}(\Id\otimes\bcurlg\div_{\Gamma})
\tilde{\bfA}
\qquad\text{on}~\Gamma\times\Gamma,
\end{align*}
with
\begin{align*}
	\tilde{\bfA} = \Cor\big[r,\bfga_{N}^{+}\delta\bfE^s[r]\big].
\end{align*}
Then, a relation between $\tilde{\bfA}$ and $\bfB$ is directly given by \eqref{eq:D2N}, i.e.,
it holds
\begin{align*}
	\big(\Id\otimes\bfcS_{\kappa}\big)\tilde{\bfA} = \bigg(\Id\otimes\bigg(\frac{1}{2}+\bfcC_{\kappa}\bigg)\bigg)\bfB\qquad\text{on}~\Gamma\times\Gamma.
\end{align*}
The discretization of these equations is the topic of the next subsection.

\section{Galerkin Discretization}
\label{sec:galerkin}

As outlined in the previous section, the computation of the boundary values for the
second order correction term requires the operator equations \eqref{eq:EFIE} and
\eqref{eq:TPeq} to be solved. To improve readability we start with the variational
formulation of \eqref{eq:EFIE}, rather than \eqref{eq:TPeq}. The variational formulations read
\begin{quote}
find $\bfga_{N}^+\bfE_0\in\chispace$ such that
\[
\big\langle\bfcS_{\kappa}\big(\bfga_{N}^+\bfE_0\big),\bfv\big\rangle_{\times}=
\langle\bfg,\bfv\rangle_{\times}
\]
for all $\bfv\in\chispace$,
\end{quote}
for \eqref{eq:EFIE} and
\begin{quote}
find $\bfga_{N}^+\bfE_0^s\in\chispace$ such that
\begin{align}\label{eq:D2Nvar}
\big\langle\bfcS_{\kappa}\big(\bfga_{N}^+\bfE_0^s\big),\bfv\big\rangle_{\times}=
\bigg\langle\bigg(\frac{1}{2}+\bfcC_{\kappa}\bigg)\bfg,\bfv\bigg\rangle_{\times}
\end{align}
for all $\bfv\in\chispace$,
\end{quote}
for \eqref{eq:D2N}, see also \cite{BH03}.
We will use $N_{\times}$-dimensional piecewise polynomial finite element spaces $\bfS_h^d\subset\chispace$ with polynomial degree of at least $d\geq0$,
generated from a quasi-uniform mesh. Since the right-hand side of \eqref{eq:D2Nvar}
is difficult to compute due to the involved integral operator, we use the
approximation
\[
\bigg(\frac{1}{2}+\bfcC_{\kappa}\bigg)\bfg
\approx
\frac{1}{2}\bfg-\bfcC_{\kappa}\bfPi_h^{\bfNull}\Big(\big(\bfPi_h^{\bfNull}(\bfg\times\bfn)\big)\times\bfn\Big)
\]
where $\bfPi_h^{\bfNull}$ denotes the $\bfL^2(\Gamma)$-projection onto $\bfS_h^d$.
This yields the systems of linear equations
\[
\underline{\bfS}_{\kappa}\underline{\bfj}^{(\bfga_{N}^+\bfE_0)}=\underline{\bfg}
\]
for \eqref{eq:EFIE}, and
\[
\underline{\bfS}_{\kappa}\underline{\bfj}^{(\bfga_{N}^+\bfE_0^s)}=\bigg(\frac{1}{2}\underline{\bfI}-\underline{\bfC}_{\kappa}\underline{\bfM}_{\bfNull}^{-1}\underline{\bfM}_{\times}\underline{\bfM}_{\bfNull}^{-1}\bigg)\underline{\bfg}
\]
for \eqref{eq:D2N}. The occurring matrices are defined as
\begin{align*}
\underline{\bfS}_{\kappa}={}&\big[\langle\bfcS_{\kappa}\bfpsi_j,\bfpsi_i\rangle_{\times}\big]_{i,j=1}^{N_\times},&
\underline{\bfM}_{\bfNull}={}&\big[(\bfpsi_j,\bfpsi_i)_{\bfNull}\big]_{i,j=1}^{N_\times},\\
\underline{\bfC}_{\kappa}={}&\big[\langle\bfcC_{\kappa}\bfpsi_j,\bfpsi_i\rangle_{\times}\big]_{i,j=1}^{N_\times},&
\underline{\bfM}_{\times}={}&\big[\langle\bfpsi_j,\bfpsi_i\rangle_{\times}\big]_{i,j=1}^{N_\times},
\end{align*}
for $\bfpsi_i\in\bfS_h^d$, $i=1,\ldots,N_{\times}$.

Unfortunately, $\bfB$ cannot be computed analytically, but has to be
obtained via a numerical approximation. Denoting the $L^2(\Gamma)$-projection onto the $N_{0}$-dimensional, continuous, piecewise polynomial finite element space $S_h^{d+1}\subset L^2(\Gamma)$ with polynomial degree $d+1$ by $\Pi_h^0$, we approximate
\begin{align*}
\bfB \approx \bfB_h={}&-
\Big(\Id\otimes\big((\bfga_{N}^{+}\bfE_0)_h\times\bfn\big)\Big)
\big(\Id\otimes\Pi_h^0\big)\Cor[r]\\
&\qquad\qquad+
\kappa^{-2}(\Id\otimes\bcurlg)\Big(\Big(\Id\otimes\big(\Pi_h^0\div_{\Gamma}(\bfga_{N}^{+}\bfE_0)_h\big)\Big)
\big(\Id\otimes\Pi_h^0\big)\Cor[r]\Big).
\end{align*}
This yields
\begin{align}\label{eq:Bhmatrix}
\underline{\bfB}\approx\underline{\bfB}_h
=
\big(-\underline{\bfN}_1\underline{\bfM}_0^{-1}+\kappa^{-2}\underline{\bfN}_2\underline{\bfM}_0^{-1}\big)\underline{\bfC},
\end{align}
with the matrices
\begin{align*}
\underline{\bfN}_{1}={}&\big[-\langle(\bfga_{N}^{+}\bfE_0)_h\psi_j,\bfpsi_i\rangle_{\bfNull}\big]_{\substack{i=1\ldots N_\times\\j=1,\ldots N_0}},&
\underline{\bfM}_{0}={}&\big[(\psi_j,\psi_i)_{0}\big]_{i,j=1}^{N_0},\\
\underline{\bfN}_{2}={}&\big[\langle\Pi_h^0\div_{\Gamma}(\bfga_{N}^{+}\bfE_0)_h\psi_j,\div_{\Gamma}\bfpsi_i\rangle_{\bfNull}\big]_{\substack{i=1\ldots N_\times\\j=1,\ldots N_0}},&
\underline{\bfC}={}&\big[\langle\Cor[r]\psi_j,\psi_i\rangle_{0}\big]_{i,j=1}^{N_0},
\end{align*}
for $\bfpsi_i\in\bfS_h^d$, $i=1,\ldots,N_{\times}$ and $\psi_i\in S_h^d$, $i=1,\ldots,N_0$. Note that we used the duality
\[
\langle\bcurl_{\Gamma}q,\bfp\rangle_{\times}=\langle q,\div_{\Gamma}\bfp\rangle_0,\qquad~\text{for all}~q\in H^{1/2}(\Gamma),\bfp\in\chispace,
\]
see \cite{BH03}, for the representation of $\underline{\bfN}_2$.

Applying a similar argument as for \eqref{eq:D2Nvar} to the variational formulation for
$\tilde{\bfA}$, i.e.,
\begin{quote}
find $\tilde{\bfA}\in\halfmixchispace$ such that
\[
\big\langle\big(\Id\otimes\bfcS_{\kappa}\big)\tilde{\bfA},\bfv\big\rangle_{0,\times}=
\bigg\langle\bigg(\Id\otimes\bigg(\frac{1}{2}+\bfcC_{\kappa}\bigg)\bigg)\bfB_h,\bfv\bigg\rangle_{0,\times}
\]
for all $\bfv\in\halfmixchispace$,
\end{quote}
yields
\begin{align}\label{eq:tensorprodeq}
\underline{\bfS}_{\kappa}\underline{\tilde{\bfA}}_h\underline{\bfM}_0^{\intercal}=\bigg(\frac{1}{2}\underline{\bfI}-\underline{\bfC}_{\kappa}\underline{\bfM}_{\bfNull}^{-1}\underline{\bfM}_{\times}\underline{\bfM}_{\bfNull}^{-1}\bigg)\underline{\bfB}_h.
\end{align}
Here, we also used the fact that for matrices \({\bf A}\in\mathbb{R}^{k\times n}\), \({\bf B}\in
\mathbb{R}^{\ell\times m}\) and \({\bf X}\in\mathbb{R}^{m\times n}\),
there holds the relation
\begin{align}\label{eq:kronmatvec}
({\bf A}\otimes{\bf B})\tvec({\bf X})=\tvec({\bf BXA}^\intercal),
\end{align}
where, for \({\bf A}=[{\bf a}_1,\ldots,{\bf a}_n]\in\mathbb{R}^{m\times n}\),
the operation $\tvec({\bf A})$ is here defined as
\[
\tvec([{\bf a}_1,\ldots,{\bf a}_n])\isdef
\begin{bmatrix}{\bf a}_1\\ \vdots\\ {\bf a}_n\end{bmatrix}\in\mathbb{R}^{mn}.
\]

With approximations to $\bfA$ and $\bfB$ available, the boundary data of $\E\big[\delta^2\bfE[r,r]\big]$ can now be computed by the use of
\eqref{eq:meansecondlocaldug} and \eqref{eq:diagonals}. Since their computation is a straightforward combination of the arguments in this section, we do not state the precise formula here to keep the exposition simple.

We note that, except for $\underline{\bfN}_{1}$ and $\underline{\bfN}_{2}$, all matrices are available in
standard boundary element codes.
The matrices $\underline{\bfS}_{\kappa}$, $\underline{\bfC}_{\kappa}$ are dense, but
can be efficiently compressed within the $\mathcal{H}$- or $\mathcal{H}^2$-matrix
framework, see \cite{Boe10,Hac15}. $\underline{\tilde{\bfA}}_h$, $\underline{\bfB}_h$, and $\underline{\bfC}$ are also dense matrices, on whose efficient treatment we
will comment in the next section. The other matrices are sparse. The presented discretization scheme can also be applied to the framework of
\cite{JS16}, where the correlation of the first shape derivative is computed.

\section{Efficient Solution of Correlation Equations}
\label{sec:fast}
The efficient solution of correlation equations of the type
\[
(A\otimes B)\Cor[u]=\Cor[f]\qquad\text{in}~H_1\otimes H_2
\]
has been the topic of many articles, see \cite{BG04b,DHS17,HPS12,HSS08b,HZ19,vPS06} to mention
a few. Their commonality is that they are usually employed for the standard Sobolev spaces
$H_1\otimes H_2=H^s\otimes H^t$ or $H_1\otimes H_2=\bfH^s\otimes\bfH^t$.
For the efficient solution of such correlation equations in more involved Sobolev spaces,
sparse edge elements have been proposed in \cite{HJS13}. A general framework for the
construction of sparse finite element spaces for tensor product domains using the
combination technique was proposed in \cite{HPS13}. This framework was also employed for the
construction of sparse boundary element spaces in $\chispace\times\chispace$ in
\cite{JS16}. However, to keep implementation simple, we shall not pursue this approach here,
but rather follow a purely algebraic strategy.

Our starting point is the matrix equation \eqref{eq:tensorprodeq}. As discussed in the
previous section, the matrices $\underline{\tilde{\bfA}}_h$, $\underline{\bfB}_h$, and $\underline{\bfC}$ are dense and thus need to be compressed in some data sparse format. However,
it is well known that the correlation matrix $\underline{\bfC}\in\mathbb{R}^{N_0\times N_0}$ can be well approximated by a low-rank approximation, if the eigenvalues of the
associated integral operator
\[
(\cK\varphi)(\bfx)=\int_{\Gamma}\Cor[r](\bfx,\bfy)\varphi(\bfy)\d\sigma_{\bfy},\quad\bfx\in\Gamma,
\]
decay sufficiently fast. The decay to be expected can be quantified in terms of the
Sobolev smoothness of the prescribed correlation kernel $\Cor[r]$, see \cite{GH17}.
For the computation of such a low-rank approximation, the pivoted Cholesky decomposition,
see \cite{HPS12}, is a simple but effective tool.
Given access to on-the-fly computable matrix entries, the algorithm provides a black-box strategy to obtain an error-controlled
low-rank approximation $\underline{\bfC}\approx\underline{\bfL}\underline{\bfL}^\intercal$,
$\underline{\bfL}\in\mathbb{R}^{N_0\times k}$, $k\ll N_0$. This can be accomplished without
the full assembly of $\underline{\bfC}$. Plugging the low-rank expansion into \eqref{eq:Bhmatrix} and
\eqref{eq:tensorprodeq}, it is straightforward to see that $\underline{\bfB}_h$ and
$\underline{\tilde{\bfA}}_h$ also provide a low-rank structure. Moreover,
these low-rank approximations are straightforwardly computable with standard matrix-vector
products and standard iterative solvers.

Of course, a low-rank approximation of $\Cor[r]$ cannot always be expected, in particular
for correlation functions with with a short correlation length or low Sobolev smoothness when the eigenvalues of $\cK$ decay
slowly. Let us note that the $\cH$-matrix approach to correlation equations, see
\cite{DHP15, DHP17, DHS17}, provides a suitable tool to cope with \eqref{eq:tensorprodeq}
in this case. In particular, all matrices (including the sparse matrices) in \eqref{eq:tensorprodeq} can be represented
in $\cH$-matrix format, see also \cite{DHS17,Hac15}.

\section{Numerical Examples}
\label{sec:examples}
We look at the electromagnetic scattering of an incident wave on the unit sphere.
For the deterministic case, an analytical solution to the scattering problem of a
plane incident wave is given by the Mie series, see, e.g.,
\cite[Chapter 6.9]{Har01}. Thus, when the radius of the sphere is
randomly perturbed according to $U[-\varepsilon,\varepsilon]$, a highly accurate
approximation to the mean can easily be computed by a one dimensional Gaussian
quadrature rule of high order. The reference solution for the following examples will
be computed with 16 quadrature points.

The boundary element computations are performed within the software library Bembel, see \cite{DHM+,DHM+19}. Therefore, the unit sphere is represented with six parametric patches,
on which we use lowest order Raviart-Thomas elements on quadrilaterals for the
discretization of $\chispace$ and linear, patch-wise continuous finite element spaces for the
discretization of $L^2$.
The compression scheme for the boundary element matrices $\underline{\bfC}_\kappa$ and $\underline{\bfS}_\kappa$ was used with a fixed polynomial degree of $15$, see \cite{DKSW18} for details, which is sufficient for usual scattering problems on spheres.
The solution of the matrix equation \eqref{eq:tensorprodeq} is accomplished with the
low-rank approach and the pivoted Cholesky decomposition with a tolerance of $10^{-6}$, see \cite{HPS12}.

Since $U[-\varepsilon,\varepsilon]$ is symmetric around the origin, we can expect a
perturbation error of $\cO(\varepsilon^4)$ when computing the second order correction term, whereas we can expect an error of $\cO(\varepsilon^2)$ when neglecting it.
We exemplarily check the expansions for $\kappa=2$ and compute
the $\ell^\infty$-error on 100 points which are equally distributed on a sphere with radius 2 around the origin.
Figures~\ref{fig:hepsilonasymp} illustrates that the asymptotics $\cO(\varepsilon^2)$ and $\cO(\varepsilon^4)$ are indeed visible in the numerical experiments.
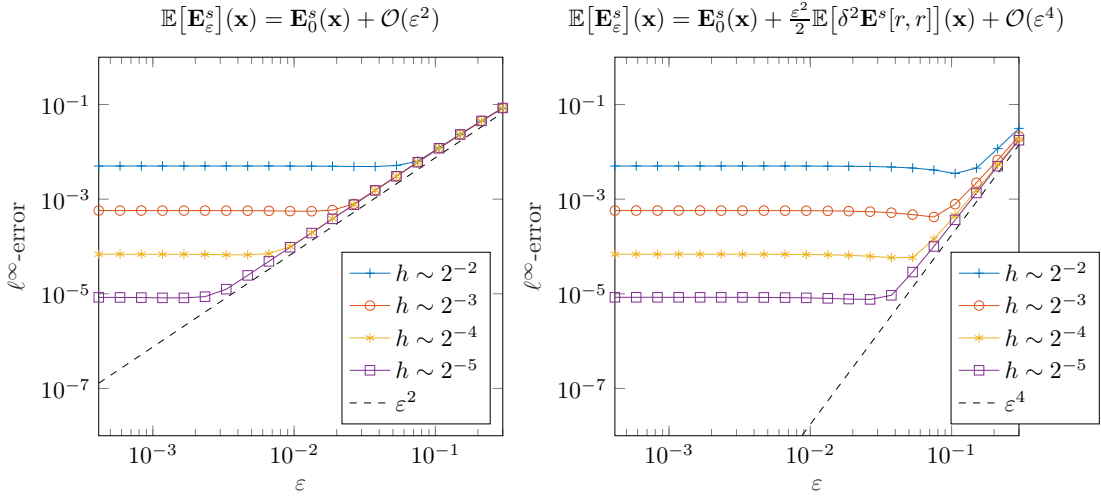
\begin{figure}
	\centering
	\scalebox{0.9}{
%
%
\definecolor{mycolor1}{rgb}{0.00000,0.44700,0.74100}%
\definecolor{mycolor2}{rgb}{0.85000,0.32500,0.09800}%
\definecolor{mycolor3}{rgb}{0.92900,0.69400,0.12500}%
\definecolor{mycolor4}{rgb}{0.49400,0.18400,0.55600}%
\begin{tikzpicture}

\begin{axis}[%
width=0.4\textwidth,
height=0.375\textwidth,
at={(0\textwidth,0\textwidth)},
scale only axis,
xmode=log,
xmin=0.000414,
xmax=0.3,
xminorticks=true,
xlabel style={font=\color{white!15!black}},
xlabel={$\varepsilon$},
ymode=log,
ymin=1e-08,
ymax=1,
yminorticks=true,
ylabel style={font=\color{white!15!black}},
ylabel={$\ell^{\infty}$-error},
axis background/.style={fill=white},
title style={font=\bfseries},
title={$\E\big[\bfE_{{\varepsilon}}^s\big](\mathbf{x}) = \bfE_0^s(\mathbf{x})+\mathcal{O}({\varepsilon}^2)$},
legend style={at={(0.97,0.03)}, anchor=south east, legend cell align=left, align=left, legend plot pos=left, draw=black}
]
\addplot [color=mycolor1, mark=+, mark options={solid, mycolor1}]
  table[row sep=crcr]{%
0.000414	0.00502\\
0.000586	0.00502\\
0.000829	0.00502\\
0.00117	0.00502\\
0.00166	0.00502\\
0.00234	0.00502\\
0.00331	0.00502\\
0.00469	0.00502\\
0.00663	0.00502\\
0.00937	0.00501\\
0.0133	0.00499\\
0.0187	0.00496\\
0.0265	0.0049\\
0.0375	0.00489\\
0.053	0.00514\\
0.075	0.0063\\
0.106	0.0122\\
0.15	0.0235\\
0.212	0.0445\\
0.3	0.0836\\
};
\addlegendentry{$h\sim 2^{-2}$}

\addplot [color=mycolor2, mark=o, mark options={solid, mycolor2}]
  table[row sep=crcr]{%
0.000414	0.000574\\
0.000586	0.000574\\
0.000829	0.000574\\
0.00117	0.000573\\
0.00166	0.000573\\
0.00234	0.000573\\
0.00331	0.000572\\
0.00469	0.00057\\
0.00663	0.000566\\
0.00937	0.000559\\
0.0133	0.000555\\
0.0187	0.000587\\
0.0265	0.000792\\
0.0375	0.00155\\
0.053	0.00307\\
0.075	0.00607\\
0.106	0.012\\
0.15	0.0233\\
0.212	0.0449\\
0.3	0.0844\\
};
\addlegendentry{$h\sim 2^{-3}$}

\addplot [color=mycolor3, mark=asterisk, mark options={solid, mycolor3}]
  table[row sep=crcr]{%
0.000414	6.86000000000001e-05\\
0.000586	6.86000000000001e-05\\
0.000829	6.85e-05\\
0.00117	6.84e-05\\
0.00166	6.82e-05\\
0.00234	6.77e-05\\
0.00331	6.68e-05\\
0.00469	6.66000000000001e-05\\
0.00662999999999999	7.06e-05\\
0.00937	9.96e-05\\
0.0133	0.000195\\
0.0187	0.000387\\
0.0265	0.000768\\
0.0375	0.00153\\
0.053	0.00305\\
0.075	0.00605\\
0.106	0.0119\\
0.15	0.0233\\
0.212	0.045\\
0.3	0.0845000000000001\\
};
\addlegendentry{$h\sim 2^{-4}$}

\addplot [color=mycolor4, mark=square, mark options={solid, mycolor4}]
  table[row sep=crcr]{%
0.000414	8.40000000000001e-06\\
0.000586	8.37e-06\\
0.000829	8.31e-06\\
0.00117	8.2e-06\\
0.00166	8.18999999999999e-06\\
0.00234	8.69e-06\\
0.00331	1.25e-05\\
0.00469	2.45e-05\\
0.00662999999999999	4.84e-05\\
0.00937	9.63000000000001e-05\\
0.0133	0.000192\\
0.0187	0.000383\\
0.0265	0.000764999999999999\\
0.0375	0.00153\\
0.053	0.00304\\
0.075	0.00603999999999999\\
0.106	0.0119\\
0.15	0.0233\\
0.212	0.045\\
0.3	0.0845000000000001\\
};
\addlegendentry{$h\sim 2^{-5}$}

\addplot [color=black, dashed]
  table[row sep=crcr]{%
0.000414	1.2873744e-07\\
0.3	0.0675999999999999\\
};
\addlegendentry{$\varepsilon^2$}

\end{axis}
\end{tikzpicture}
	\scalebox{0.9}{
%
%
\definecolor{mycolor1}{rgb}{0.00000,0.44700,0.74100}%
\definecolor{mycolor2}{rgb}{0.85000,0.32500,0.09800}%
\definecolor{mycolor3}{rgb}{0.92900,0.69400,0.12500}%
\definecolor{mycolor4}{rgb}{0.49400,0.18400,0.55600}%
\begin{tikzpicture}

\begin{axis}[%
width=0.4\textwidth,
height=0.375\textwidth,
at={(0\textwidth,0\textwidth)},
scale only axis,
xmode=log,
xmin=0.000414,
xmax=0.3,
xminorticks=true,
xlabel style={font=\color{white!15!black}},
xlabel={$\varepsilon$},
ymode=log,
ymin=1e-08,
ymax=1,
yminorticks=true,
ylabel style={font=\color{white!15!black}},
ylabel={$\ell^{\infty}$-error},
axis background/.style={fill=white},
title style={font=\bfseries},
title={$\E\big[\bfE_{{\varepsilon}}^s\big](\mathbf{x}) = \bfE_0^s(\mathbf{x})+\frac{{\varepsilon}^2}{2}{\E\big[\delta^2 \bfE^s[r,r]\big]}(\mathbf{x})+\mathcal{O}({\varepsilon}^4)$},
legend style={at={(1.2,0.03)}, anchor=south east, legend cell align=left, align=left, legend plot pos=left, draw=black}
]
\addplot [color=mycolor1, mark=+, mark options={solid, mycolor1}]
  table[row sep=crcr]{%
0.000414	0.00502\\
0.000586	0.00502\\
0.000829	0.00502\\
0.00117	0.00502\\
0.00166	0.00502\\
0.00234	0.00502\\
0.00331	0.00502\\
0.00469	0.00502\\
0.00663	0.00502\\
0.00937	0.00501\\
0.0133	0.00499\\
0.0187	0.00496\\
0.0265	0.0049\\
0.0375	0.00478\\
0.053	0.00455\\
0.075	0.00414\\
0.106	0.00349\\
0.15	0.00453\\
0.212	0.0117\\
0.3	0.0313\\
};
\addlegendentry{$h\sim 2^{-2}$}

\addplot [color=mycolor2, mark=o, mark options={solid, mycolor2}]
  table[row sep=crcr]{%
0.000414	0.000574\\
0.000586	0.000574\\
0.000829	0.000574\\
0.00117	0.000574\\
0.00166	0.000574\\
0.00234	0.000573\\
0.00331	0.000573\\
0.00469	0.000573\\
0.00663	0.000572\\
0.00937	0.00057\\
0.0133	0.000566\\
0.0187	0.000558\\
0.0265	0.000544\\
0.0375	0.000517\\
0.053	0.000471\\
0.075	0.000419\\
0.106	0.000782\\
0.15	0.00222\\
0.212	0.00674\\
0.3	0.0212\\
};
\addlegendentry{$h\sim 2^{-3}$}

\addplot [color=mycolor3, mark=asterisk, mark options={solid, mycolor3}]
  table[row sep=crcr]{%
0.000414	6.86999999999999e-05\\
0.000586	6.86999999999999e-05\\
0.000829	6.86999999999999e-05\\
0.00117	6.86999999999999e-05\\
0.00166	6.86000000000001e-05\\
0.00234	6.86000000000001e-05\\
0.00331	6.86000000000001e-05\\
0.00469	6.84e-05\\
0.00662999999999999	6.82e-05\\
0.00937	6.77e-05\\
0.0133	6.68e-05\\
0.0187	6.51e-05\\
0.0265	6.21e-05\\
0.0375	5.81e-05\\
0.053	5.82e-05\\
0.075	0.000143\\
0.106	0.000454\\
0.15	0.00154\\
0.212	0.00534\\
0.3	0.0184\\
};
\addlegendentry{$h\sim 2^{-4}$}

\addplot [color=mycolor4, mark=square, mark options={solid, mycolor4}]
  table[row sep=crcr]{%
0.000414	8.43e-06\\
0.000586	8.43e-06\\
0.000829	8.43e-06\\
0.00117	8.42e-06\\
0.00166	8.42e-06\\
0.00234	8.41e-06\\
0.00331	8.40000000000001e-06\\
0.00469	8.37e-06\\
0.00662999999999999	8.31e-06\\
0.00937	8.2e-06\\
0.0133	8.01e-06\\
0.0187	7.73e-06\\
0.0265	7.61e-06\\
0.0375	9.28e-06\\
0.053	2.88e-05\\
0.075	0.0001\\
0.106	0.000365\\
0.15	0.00135\\
0.212	0.00498\\
0.3	0.0176\\
};
\addlegendentry{$h\sim 2^{-5}$}

\addplot [color=black, dashed]
  table[row sep=crcr]{%
0.00663000000000001	3.358704139648e-09\\
0.3	0.01408\\
};
\addlegendentry{$\varepsilon^4$}

\end{axis}
\end{tikzpicture}
	\caption{\label{fig:hepsilonasymp}Asymptotics in $\varepsilon$ for the second (left) and the fourth (right) order perturbation approach using lowest order Raviart-Thomas elements and various mesh widths $h$.}
\end{figure}
For a better comparison of the asymptotics in the perturbation size,
figure~\ref{fig:epsiloncomparison} provides a comparison of the second order and the fourth
order approach in the same figure for the most accurate discretizations used in figure
\ref{fig:hepsilonasymp}. It seems that, in this particular case, the higher order perturbation
approach provides an accuracy which is more accurate than the discretization error of the boundary
element scheme, at least up to a perturbation of up to five percent of the radius. After that, the
higher order approach is still significantly more accurate than the second order approach.
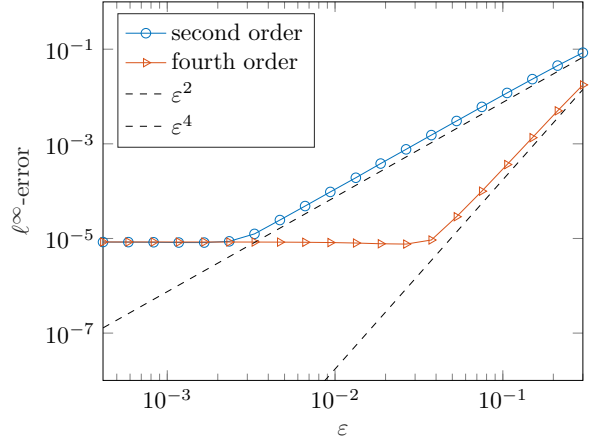
\begin{figure}
	\centering
	\scalebox{0.9}{
%
%
\definecolor{mycolor1}{rgb}{0.00000,0.44700,0.74100}%
\definecolor{mycolor2}{rgb}{0.85000,0.32500,0.09800}%
\begin{tikzpicture}

\begin{axis}[%
width=0.475\textwidth,
height=0.375\textwidth,
at={(0\textwidth,0\textwidth)},
scale only axis,
xmode=log,
xmin=0.000414,
xmax=0.3,
xminorticks=true,
xlabel style={font=\color{white!15!black}},
xlabel={$\varepsilon$},
ymode=log,
ymin=1e-08,
ymax=1,
yminorticks=true,
ylabel style={font=\color{white!15!black}},
ylabel={$\ell^{\infty}$-error},
axis background/.style={fill=white},
title style={font=\bfseries},
title={Comparison of second and fourth order accurate $\varepsilon$-asymptotics},
legend style={at={(0.03,0.97)}, anchor=north west, legend cell align=left, align=left, draw=white!15!black}
]
\addplot [color=mycolor1, mark=o, mark options={solid, mycolor1}]
  table[row sep=crcr]{%
0.000414	8.40000000000001e-06\\
0.000586	8.37e-06\\
0.000829	8.31e-06\\
0.00117	8.2e-06\\
0.00166	8.18999999999999e-06\\
0.00234	8.69e-06\\
0.00331	1.25e-05\\
0.00469	2.45e-05\\
0.00662999999999999	4.84e-05\\
0.00937	9.63000000000001e-05\\
0.0133	0.000192\\
0.0187	0.000383\\
0.0265	0.000764999999999999\\
0.0375	0.00153\\
0.053	0.00304\\
0.075	0.00603999999999999\\
0.106	0.0119\\
0.15	0.0233\\
0.212	0.045\\
0.3	0.0845\\
};
\addlegendentry{second order}

\addplot [color=mycolor2, mark=triangle, mark options={solid, rotate=270, mycolor2}]
  table[row sep=crcr]{%
0.000414	8.43e-06\\
0.000586	8.43e-06\\
0.000829	8.43e-06\\
0.00117	8.42e-06\\
0.00166	8.42e-06\\
0.00234	8.41e-06\\
0.00331	8.40000000000001e-06\\
0.00469	8.37e-06\\
0.00662999999999999	8.31e-06\\
0.00937	8.2e-06\\
0.0133	8.01e-06\\
0.0187	7.73e-06\\
0.0265	7.61e-06\\
0.0375	9.28e-06\\
0.053	2.88e-05\\
0.075	0.0001\\
0.106	0.000365\\
0.15	0.00135\\
0.212	0.00498\\
0.3	0.0176\\
};
\addlegendentry{fourth order}

\addplot [color=black, dashed]
  table[row sep=crcr]{%
0.000414	1.2873744e-07\\
0.3	0.0675999999999999\\
};
\addlegendentry{$\varepsilon^2$}

\addplot [color=black, dashed]
  table[row sep=crcr]{%
0.00663000000000001	3.358704139648e-09\\
0.3	0.01408\\
};
\addlegendentry{$\varepsilon^4$}

\end{axis}
\end{tikzpicture}
	\caption{\label{fig:epsiloncomparison}Comparison of asymptotics in $\varepsilon$ for the second and the fourth order perturbation approach using lowest order Raviart-Thomas elements with mesh width $h\sim 2^{-5}$.}
\end{figure}

In a second numerical experiment, we would like to investigate the dependence of the
perturbation approach on the wavenumber.
Therefore, we observe that the oscillations of the scattered wave increase for larger wavenumbers,
such that the accuracy of the shape Taylor expansions \eqref{eq:shapeTaylor} and \eqref{eq:mean}
is likely to decrease. Thus, we have to expect to lose accuracy when increasing the wavenumber.
Second, since the presented perturbation approach itself relies on
methods for the solution of electromagnetic scattering problems, we can only expect a
well behaved numerical behaviour of the approach if the original scattering problem can
be solved satisfyingly.

\begin{figure}
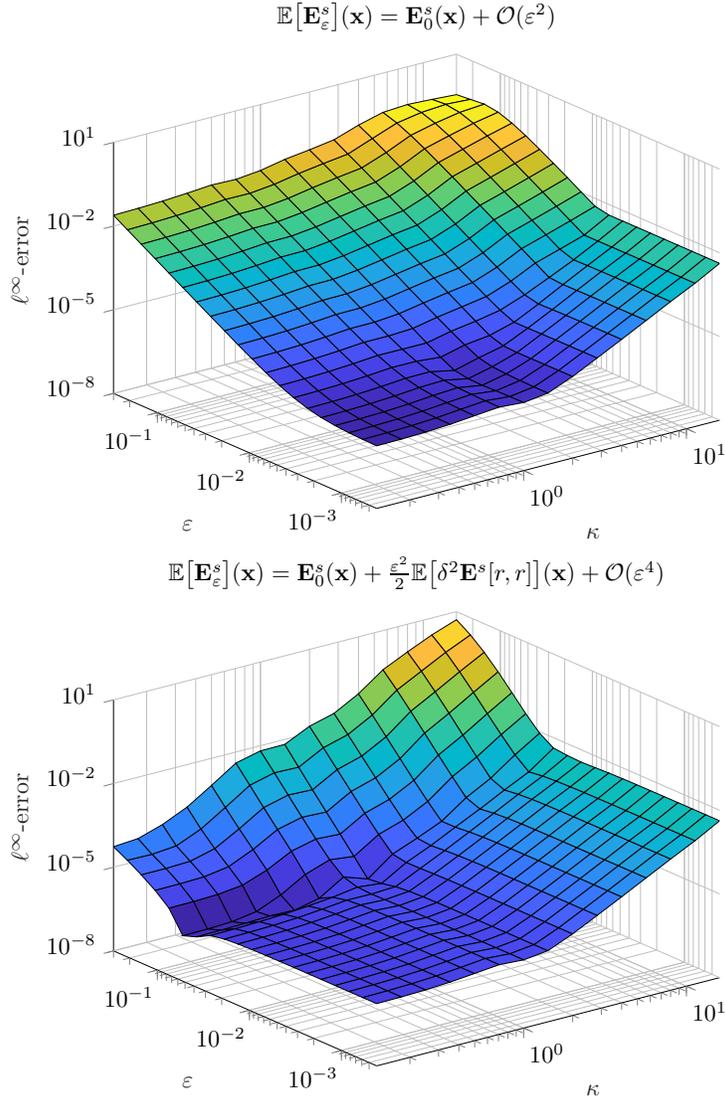

	\centering
	\scalebox{0.9}{\input{eps_vs_kappa_e2o.tex}}
	\scalebox{0.9}{\input{eps_vs_kappa_e3o.tex}}
	\caption{\label{fig:kappaepsilonasymp}Asymptotics in $\varepsilon$ for the second (top) and the fourth (bottom) order perturbation approach using lowest order Raviart-Thomas elements and various wavenumbers $\kappa$ for $h\sim 2^{-5}$.}
\end{figure}
Figure \ref{fig:kappaepsilonasymp} illustrates that we can indeed see these behaviours of the approach.
The decreasing accuracy for wavenumbers $\kappa\geq2$ and small perturbation $\varepsilon$ seems to
be due to the fact that a mesh with mesh size $h\sim 2^{-5}$ cannot resolve the wavenumber anymore.
Thus for $\kappa > 2$, we can not expect an accurate numerical solution of the scattering
problem on the reference domain, which also applies to the second order correction term.

\section{Conclusion}
\label{sec:concl}
We considered time-harmonic electromagnetic scattering problems on perfectly conducting scatterers with uncertain shape. Following the perturbation approach, we improved the existing expansion for the
mean of the scattered electromagnetic field.
Therefore, we characterized the second order correction term $\delta^2\bfE^s[\bfV(\omega),\bfV(\omega)]$
of the shape Taylor expansion
\[
\bfE_\varepsilon^s(\omega) = \bfE^s_0+\varepsilon\delta\bfE^s[\bfV(\omega)]+\frac{\varepsilon^2}{2}\delta^2\bfE^s[\bfV(\omega),\bfV(\omega)]
+\cO(\varepsilon^3)
\]
for a single boundary perturbation $\varepsilon\bfV(\omega)$. The general form $\delta^2\bfE^s[\bfV(\omega),\bfV'(\omega)]$ of this second order correction term
could possibly also be interesting for problems in shape optimization. With
the shape Taylor expansion for a single perturbation available, we characterized the
second order correction term of the mean and obtained an improved expansion
\[
\E\big[\bfE_\varepsilon^s\big]=\bfE^s_0+\frac{\varepsilon^2}{2}
\E\big[\delta^2\bfE^s\big]+\cO(\varepsilon^3).
\]
The error term becomes $\cO(\varepsilon^4)$ if the domain variations are symmetric.
The boundary conditions of the second order correction term can be obtained from the solution
of an additional tensor product problem in the space $\halfmixchispace$.
The formulas for the correction term were validated by numerical experiments and the
accuracy for different mesh widths and wavenumbers was compared for the second order
accurate perturbation approach and the higher order approach. It was clearly visible
that the higher order perturbation approach provides an advantage in terms of accuracy
compared to the second order perturbation approach.

Another use for the second order correction term $\E\big[\delta^2\bfE^s\big]$ which we
did not consider in this article is the
computation of the correlation of the electric field. In \cite{JS16} it was shown that the
covariance of the electric field satisfies
\[
\Cov\big[\bfE_\varepsilon^s\big]=\varepsilon^2\Cor\big[\delta\bfE^s\big]+\cO(\varepsilon^3),
\]
for $0<\varepsilon\leq\varepsilon_0$. Here, the correction term $\Cor[\delta\bfE^s]$ can
be obtained by a tensor product equation in $\chispace\otimes\chispace$ which is very similar
to \eqref{eq:tensorprodeq}. Employing the very same arguments to the correlation yields, see
also \cite{DH18} for the Laplace case,
\[
\Cor\big[\bfE_\varepsilon^s\big]
=
\bfE_0^s\otimes\bfE_0^s
+
\varepsilon^2\Cor\big[\delta\bfE^s\big]
+
\frac{\varepsilon^2}{2}\Big(
\bfE_0^s\otimes\E\big[\delta^2\bfE^s\big]
+
\E\big[\delta^2\bfE^s\big]\otimes\bfE_0^s
\Big)
+
\cO(\varepsilon^3).
\]
Again, both expansions become fourth order accurate if the distribution of the domain
perturbations are symmetric. We note that the low-rank and $\mathcal{H}$-matrix techniques
employed for the solution of \eqref{eq:tensorprodeq} also apply to the equation for
$\Cor\big[\delta\bfE^s\big]$. However, since the application of these techniques to the
remaining equation is straightforward and since the equation itself was thoroughly discussed in \cite{JS16}, we did not include it in our numerical experiments.

\bibliographystyle{plain}
\bibliography{bibl}
\end{document}